\documentclass[oneside,11pt]{amsart}

\usepackage{amssymb,amsfonts,amsmath,amsthm}
\usepackage{mathtools}
\usepackage{needspace}
\usepackage{lmodern}
\usepackage{microtype}
\usepackage[left=3cm,right=3cm,bottom=3cm]{geometry}
\usepackage{enumerate}
\usepackage{url}
\usepackage[colorlinks]{hyperref} 
\hypersetup{
bookmarksnumbered,
pdfstartview={FitH},
breaklinks=true,
linkcolor=blue,
urlcolor=blue,
citecolor=blue,
bookmarksdepth=2
}
\usepackage[nameinlink,capitalise,noabbrev]{cleveref}
\allowdisplaybreaks

\newtheorem{thm}{Theorem}[section]

\newtheorem*{thm*}{Theorem}
\newtheorem*{cor*}{Corollary}
\newtheorem*{prop*}{Proposition}

\newtheorem{prop}[thm]{Proposition}
\newtheorem{lem}[thm]{Lemma}

\theoremstyle{definition}

\newtheorem*{notn*}{Notation}

\theoremstyle{remark}
\newtheorem{rem}[thm]{Remark}

\newtheorem*{idea*}{Idea}

\newcommand\smvee{\raise0.9ex\hbox{$\scriptscriptstyle\vee$}}
\makeatletter
\let\origsection\section
\renewcommand\section{\@ifstar{\starsection}{\nostarsection}}

\newcommand\nostarsection[1]
{\sectionprelude\origsection{#1}\sectionpostlude}

\newcommand\starsection[1]
{\sectionprelude\origsection*{#1}\sectionpostlude}

\newcommand\sectionprelude{%
  \vspace{1em}
}

\newcommand\sectionpostlude{%
  \vspace{1em}
}

\newcommand*\mybackmatter{%
\startcontents
\phantomsection
\addcontentsline{toc}{part, thebibliography}{}%
\@endpart}

\makeatother

\makeatletter
\let\c@equation\c@thm
\makeatother
\numberwithin{thm}{section}
\numberwithin{equation}{section}

\title[Non-algebraicity of Deligne--Hitchin twistor spaces]{Non-algebraicity of Deligne--Hitchin twistor spaces}
\author{Pengfei Huang}
\subjclass[2020]{14D20, 14H60, 32A20, 14A20}
\keywords{Deligne--Hitchin twistor space, meromorphic function, non-algebraicity, Prym variety, algebraic space, definable complex analytic space}

\begin{document}
\pagenumbering{arabic}

\maketitle

\begin{abstract}
In this paper, we prove that on a smooth complex projective curve of positive genus, the Deligne--Hitchin twistor spaces for the structure groups $\mathrm{GL}(n,\mathbb{C})\ (n\geq1)$ and $\mathrm{SL}(n,\mathbb{C})\ (n\geq2)$ admit no algebraization by a complex scheme locally of finite type. When the genus is at least two, the same holds for their stable loci and hence for the corresponding Hitchin twistor spaces. Nevertheless, the Deligne--Hitchin twistor spaces admit $\mathbb{R}_{\mathrm{alg}}$-definable complex analytic structures when the rank or the genus is one.
\end{abstract}

\section{Introduction}\label{sec:introduction}

The study of Higgs bundles on a smooth complex projective curve started from Hitchin's seminal work on the famous self-duality equations, which also
gave a hyperK\"ahler structure on the relevant smooth moduli spaces in rank two \cite{Hit87}. Together with the fundamental work of Donaldson, Corlette, and Simpson \cite{Don87,Cor88,Sim92}, this yields the celebrated nonabelian Hodge correspondence between semisimple local systems and polystable Higgs bundles of degree zero. The algebraic counterpart was developed by Simpson, who constructed the corresponding algebraic moduli spaces \cite{Sim94a,Sim94b} and showed that the correspondence identifies their underlying topological spaces \cite[Theorem~7.18]{Sim94b}, while on the smooth locus their complex structures lie in the same hyperK\"ahler family \cite{Sim97}.

The moduli-theoretic description of this family begins with Deligne's introduction of 
$\lambda$-flat bundles, which interpolate between Higgs bundles and flat bundles. Simpson developed the theory further, constructing the Hodge moduli spaces of $\lambda$-flat bundles, and gluing the analytification of the Hodge moduli space to its counterpart for the complex conjugate curve \cite[Section~4]{Sim97} (see also \cite{Hua20a,Hua20b}). The resulting \emph{Deligne--Hitchin twistor space} has since been investigated through its Torelli properties \cite{BGHL09}, its automorphism groups in rank one \cite{BH17}, and the holomorphic symplectic geometry of suitable smooth loci in its spaces of holomorphic sections \cite{BBHR26}. Derived analytic versions of the Deligne--Hitchin construction have also been studied in \cite{FH24,KTY26}. Although the two Hodge moduli spaces are algebraic, the absence of a natural algebraic structure on the whole space was already noted in \cite{BGHL09,BH17}.

An analytic gluing of algebraic moduli spaces does not by itself settle whether the resulting complex space admits an algebraic structure, since a possible algebraization need not preserve the
given charts or the twistor projection. Therefore, to exclude every such algebraization, one needs an
obstruction which depends on the complex analytic structure itself. The goal of this paper is to prove the non-algebraicity of the full Deligne--Hitchin twistor spaces and their stable loci under the genus
assumptions below.

Let $X$ be a smooth complex projective curve of genus $g$, and let $G=\mathrm{GL}(n,\mathbb{C})$ with $n\geq1$, or $G=\mathrm{SL}(n,\mathbb{C})$ with
$n\geq2$. Denote by $\mathrm{TW}_{\rm DH}(X,G)$ the 
Deligne--Hitchin twistor space and by
$\mathrm{TW}_{\rm DH}(X,G)^s$ its stable locus,
where the determinant for $G=\mathrm{SL}(n,\mathbb{C})$
is fixed to be the trivial line $\lambda$-flat bundle
\cite[Section~2.3]{HHZ24}.
An \emph{algebraization} of a complex space is a biholomorphism
with the analytification of a complex scheme locally of finite type.
Our main result is the following.

\begin{thm}\label{thm:main}
If $g\geq1$, then $\mathrm{TW}_{\rm DH}(X,G)$ admits no
algebraization.
If $g\geq2$, then the same holds for
$\mathrm{TW}_{\rm DH}(X,G)^s$.
\end{thm}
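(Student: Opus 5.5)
The obstruction I will use is the \emph{field of meromorphic functions}, suggested by the keywords. If a connected complex space $Y$ is biholomorphic to $Z^{\mathrm{an}}$ for a complex scheme $Z$ locally of finite type, then every irreducible component of $Z$ is a variety. Each variety carries many rational functions, and these become meromorphic functions on $Y$. More precisely, through a general point of each component of positive dimension $d$ there are rational functions separating points generically. Hence the meromorphic function field of each irreducible component of $Y$ has transcendence degree equal to its dimension. So it suffices to produce an irreducible component of $\mathrm{TW}_{\rm DH}(X,G)$ (or of its stable locus) whose meromorphic functions are too few. Taking the twistor projection $\pi\colon \mathrm{TW}_{\rm DH}\to\mathbb{P}^1$ into account, I will aim to show something like the following: every meromorphic function on a suitable component is constant along the fibres over $\mathbb{C}^*$, or is pulled back from $\mathbb{P}^1$. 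That gives transcendence degree at most $1 < \dim$.

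\textbf{Step 1: rank one, or $G=\mathrm{GL}(n)$ via the determinant.} For $\mathrm{GL}(1)$ the twistor space is explicit. Over $\lambda\in\mathbb{C}^*$ the fibre is $M_{\rm dR}(X,\mathbb{C}^*)\cong M_B=(\mathbb{C}^*)^{2g}$, and the gluing with the conjugate curve at $\infty$ is via the Riemann–Hilbert map. A meromorphic function $f$ restricts on each fibre $\pi^{-1}(\lambda)$, $\lambda\neq 0,\infty$, to a meromorphic function on $M_{\rm dR}$. I will use the isomonodromic trivialization $\pi^{-1}(\mathbb{C}^*)\cong M_B\times\mathbb{C}^*$, which is holomorphic. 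Along it, $f$ becomes meromorphic on $M_B\times\mathbb{C}^*$, and it must extend over both $\lambda=0$ (Dolbeault side, the algebraic $T^*\mathrm{Jac}$) and $\lambda=\infty$. The key rigidity point is that the holomorphic structure on $M_{\rm dR}$ and that on $M_B$ are not algebraically compatible, since Riemann–Hilbert is transcendental. I expect to prove that $f$ restricted to a generic fibre is algebraic both as a function on the affine bundle $M_{\rm dR}$ over $\mathrm{Jac}$ and on $M_B$. Forced by the boundedness/growth at $\lambda\to0,\infty$, it must then be constant on fibres. To make this precise, I will restrict to the compact Jacobian/Prym: $M_{\rm dR}$ fibres over $\mathrm{Jac}(X)$ with fibre $H^0(K)$. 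The translates by $H^1(X,\mathbb{Z})$-periods (the Prym variety in the $\mathrm{SL}(n)$ case) give compact complex tori inside $M_B$ with no nonconstant meromorphic functions, because they are non-algebraic tori of the form $(\mathbb{C}^*)^{2g}$ orbits. For $\mathrm{GL}(n)$ with $n\geq1$, the determinant map $\mathrm{TW}_{\rm DH}(X,\mathrm{GL}(n))\to \mathrm{TW}_{\rm DH}(X,\mathrm{GL}(1))$ is a holomorphic surjection. The rank one case alone is not enough, since non-algebraicity does not descend by surjections. Instead I will use the tensor action of the rank one twistor space, producing positive-dimensional analytic subspaces of the same nature inside $\mathrm{GL}(n)$.

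\textbf{Step 2: $\mathrm{SL}(n)$.} Here I will use spectral data. For suitable spectral covers $\tilde X\to X$, the Hitchin fibre is a Prym variety, and its $\lambda$-deformation embeds a twistor space of rank one for the Prym into $\mathrm{TW}_{\rm DH}(X,\mathrm{SL}(n))$. For $g\geq2$ this lies in the stable locus, since the spectral curve is smooth and integral; this is why $g\geq2$ is needed there. For $g=1$ and the full space, I will use polystable sums $L\oplus L^{-1}\oplus\mathcal O^{n-2}$ instead. I will then exhibit a closed analytic subspace $W$, isomorphic to a family of compact non-algebraic tori, that would have to be algebraic if the ambient space were. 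A closed analytic subspace of an algebraized space is algebraic by GAGA-type arguments only when compact. Here $W$ is a compact torus, and Chow's theorem applies once we know $W$ is compact. But a compact torus analytic subspace of $Z^{\mathrm{an}}$ is proper, hence algebraic, hence an abelian variety. So it suffices to find compact analytic subvarieties that are non-algebraic tori.

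\textbf{Main obstacle.} The hardest step is finding such compact non-algebraic tori. Natural candidates are closures of $\mathbb{C}$-orbits, or the \emph{twistor lines} through a Prym or Jacobian. Inside $M_{\rm dR}$ itself, compact subvarieties are scarce because $M_B$ is affine. So the compact objects must cut across $\lambda$. I would first try the real-twistor-line structure: preferred sections are $\mathbb{P}^1$'s, which are algebraic. So instead I will try to replace compact-torus reasoning with the meromorphic function count above, on a component containing $\pi^{-1}(\mathbb{C}^*)\cong M_B\times\mathbb{C}^*$. The required rigidity lemma says that a meromorphic function on $M_B\times\mathbb{C}^*$ extending meromorphically over the Dolbeault fibres at $0$ and $\infty$ depends only on $\lambda$. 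This is the transcendental heart. I expect to prove it using the $\mathbb{C}^*$-action near $\lambda=0$ and the complex-conjugate action near $\infty$, together with the fact that the Riemann–Hilbert map has no algebraic relations restricted to the Prym directions.
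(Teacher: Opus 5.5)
Your overall strategy of comparing $\mathrm{trdeg}\,\mathrm{Mer}$ with dimension and aiming at $\mathrm{Mer}=\mathbb{C}(\lambda)$ points the right way. However, each load-bearing step has a genuine gap.

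\textbf{The rigidity statement is only announced.} The claim that meromorphic functions come from $\mathbb{P}^1$ is never proved. The suggested route, via the $\mathbb{C}^*$-action, growth as $\lambda\to0,\infty$, and ``no algebraic relations'' for Riemann--Hilbert, is not an argument. The intermediate claim that $f$ is algebraic on a generic fibre is also unjustified.

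The missing idea is the harmonic lattice description. In rank one, $\mathrm{TW}_{\rm DH}(X,\mathrm{GL}(1,\mathbb{C}))=E^{\rm an}/\Gamma$, where $E=\mathrm{Tot}(\mathcal{O}_{\mathbb{P}^1}(1)^{\oplus 2g})\cong\mathbb{P}^{2g+1}\setminus\mathbb{P}^{2g-1}$ and $\Gamma=2\pi iH^1(X,\mathbb{Z})$ acts by $(a,b)\mapsto(a+\gamma^{0,1},b+\lambda\gamma^{1,0})$. The boundary has codimension two, so Levi extension and Chow's theorem make every meromorphic function on the universal cover rational. $\Gamma$-invariance is then periodicity in $\mathbb{C}(\lambda)$-affine coordinates on the generic fibre, which forces the function into $\mathbb{C}(\lambda)$.

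Your compact-torus alternative cannot work. $M_B$ is affine, hence Stein, so it contains no positive-dimensional compact analytic subsets. Compact analytic subsets of the fibres over $0,\infty$ lie in nilpotent cones, which are projective, so they are algebraic (e.g.\ the Jacobian in rank one). Also, for a non-compact variety, $\mathrm{trdeg}\,\mathrm{Mer}$ of the analytification is not its dimension (e.g.\ $e^z\in\mathrm{Mer}(\mathbb{C})$). Only $\geq$ holds, which is fortunately all you need.

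\textbf{The transfer to higher rank and to the stable loci is not set up.} Computing $\mathrm{Mer}$ of a component of the full $\mathrm{GL}(n)$ or $\mathrm{SL}(n)$ space is hard and unnecessary. As you note yourself, a non-compact closed analytic subspace of an algebraized space need not be algebraic. What you need is a pullback lemma. For a holomorphic map $F:T\to Y^{\rm an}$ with finite fibres from a connected manifold, the function field of the Zariski closure $V$ of $F(T)$ injects into $\mathrm{Mer}(T)$ via closures of graphs. Hence $\dim T\leq\dim V\leq\mathrm{trdeg}\,\mathrm{Mer}(T)$. Apply this to $L\mapsto L\oplus\mathbf{1}^{\oplus(n-1)}$ and $L\mapsto L\oplus L^*\oplus\mathbf{1}^{\oplus(n-2)}$, which have finite fibres by uniqueness of stable factors; this gives $2g+1\leq 1$.

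For the stable loci, your spectral-cover construction fails away from $\lambda=0$. For $g\geq2$ every smooth spectral curve is ramified over $X$, by Riemann--Hurwitz. The direct image of a rank one local system under a ramified cover is not a local system on $X$. Moreover, the spectral curve moves with the Higgs field. So no rank one twistor space of a Prym maps into $\mathrm{TW}_{\rm DH}(X,\mathrm{SL}(n,\mathbb{C}))^s$ this way.

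One must instead take the following steps:
\begin{itemize}
\item Use a connected cyclic \'etale cover $p:Y\to X$ of degree $n$.
\item Take the quotient $Z_W$ for $W=\ker\mathrm{Tr}_p$, which again has $\mathrm{Mer}=\mathbb{C}(\lambda)$.
\item Delete the deck-fixed loci. They have codimension at least two, so $\mathrm{Mer}$ is unchanged, while $p_*L$ becomes stable.
\item Twist by an $n$-th root $M$ of $\det(p_*\mathbf{1})^{-1}$ to fix the determinant.
\end{itemize}
The resulting map has fibres of size at most $n$ on a space of dimension $2(n-1)(g-1)+1>1$, which gives the contradiction.
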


In fact, we prove that none of these spaces admits a holomorphic embedding into the analytification of a quasi-separated complex algebraic space locally of finite type for which the analytification exists.
The full moduli spaces can be singular
\cite{Sim94b,Sim97}.

To prove the theorem, we first consider the rank one twistor space $Z_X:=\mathrm{TW}_{\rm DH}(X,\mathrm{GL}(1,\mathbb{C}))$ with twistor projection $\pi: Z_X\to\mathbb{P}^1$.
The lattice description of $Z_X$ in
\cite[proof of Theorem~4.3]{Sim97} and \cite[Section~6.7]{GX08} allows us to compute its entire meromorphic function field $\mathrm{Mer}(Z_X)$.

\begin{thm}\label{thm:rankone}
If $g\geq1$, then pullback by $\pi$ identifies
$\mathrm{Mer}(Z_X)$ with $\mathbb{C}(\mathbb{P}^1)$.
\end{thm}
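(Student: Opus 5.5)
Since $\pi$ is surjective with connected fibres, pullback $\pi^*\colon \mathbb{C}(\mathbb{P}^1)\to\mathrm{Mer}(Z_X)$ is an injective field homomorphism, and the real content is surjectivity: every meromorphic function $f$ on $Z_X$ is constant along the fibres of $\pi$. I would use the lattice description from \cite[proof of Theorem~4.3]{Sim97} and \cite[Section~6.7]{GX08}. Over $\mathbb{C}=\mathbb{P}^1\setminus\{\infty\}$, the rank-one Hodge moduli space is the total space of an affine bundle over $\mathbb{C}$. It can be written as a quotient $(\mathbb{C}\times\mathbb{C}^{2g})/\Lambda$, where $\Lambda\cong\mathbb{Z}^{2g}$ acts by translations that depend holomorphically on $\lambda$. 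Concretely, $\lambda$-connections on the trivial $C^\infty$ line bundle are $\bar\partial+\lambda\partial+\alpha+\beta$, taken modulo the gauge group, and the quotient by the identity component of gauge gives a vector bundle of rank $2g$ over $\mathbb{C}_\lambda$. For $\lambda\neq0$ the fibre is $\mathbb{C}^{2g}/H^1(X,\mathbb{Z})\cong(\mathbb{C}^*)^{2g}$, the character variety. For $\lambda=0$ the fibre is $T^*\mathrm{Jac}(X)=\mathrm{Jac}(X)\times H^0(K_X)$.

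First I restrict $f$ to a general fibre $\pi^{-1}(\lambda)$ with $\lambda\in\mathbb{C}^*$, which is isomorphic to $(\mathbb{C}^*)^{2g}$. The key point is that the fibre must be treated as a whole, not just by taking $f$ to be meromorphic there. The family near $\lambda=0$ degenerates to $T^*\mathrm{Jac}$, which contains the compact torus $\mathrm{Jac}(X)$. Because $\mathrm{Pic}$ is a group, translation by the sections $\mathrm{Pic}^0$ acts compatibly. A cleaner approach is the following. In the lattice picture, the lattice $\Lambda_\lambda\subset\mathbb{C}^{2g}$ varies with $\lambda$ so that its real span sweeps through many complex subspaces. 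Take the total space over a small disc $\Delta\ni\lambda_0$ and lift $f$ to a $\Lambda$-periodic meromorphic function $F$ on $\Delta\times\mathbb{C}^{2g}$. For $\lambda$ in a set of positive measure, or along a dense set of real directions, the closure of $\Lambda_\lambda$ modulo a suitable complex subspace contains compact complex tori, for instance the Jacobian directions near $\lambda=0$ or the Prym-type subtori the paper's keywords suggest. A meromorphic function invariant under a lattice that is dense in a real subspace $V$ is constant along $V+iV$. Using the twistor-line structure, where the fibres over $\lambda$ and $-1/\bar\lambda$ are glued via the conjugate curve, the maximal complex subspaces $V\cap iV$ for the various $\lambda$ span everything. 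I would then conclude that $F$ is independent of the $\mathbb{C}^{2g}$ variable, first on an open set and then everywhere by the identity theorem.

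To organise this concretely, I would use the preferred sections and Weierstrass-type argument. Near $\lambda=0$, write $f$ as a quotient of holomorphic functions on a neighbourhood of $\pi^{-1}(0)$. For $\lambda=0$, the compact subvarieties $\mathrm{Jac}(X)\times\{\theta\}$ force $f|_{\pi^{-1}(0)}$, when defined, to be independent of the Jacobian direction, since polar divisors on abelian varieties must be ample or else $f$ is pulled back from a quotient. The harder step, which I expect to be the main obstacle, is the $H^0(K_X)$ direction and propagating constancy to all fibres. For this I would use the $\mathbb{C}^*$-action $t\cdot(E,\nabla,\lambda)=(E,t\nabla,t\lambda)$. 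This action extends to $Z_X$ and covers the standard action on $\mathbb{P}^1$. Averaging is unavailable for meromorphic functions, but the $\mathbb{C}^*$-action shows that $f$ restricted to the fibre over $0$ is determined by a limit along orbits. Combined with Jacobian-independence near $\lambda=0$ and the lattice-density argument for generic $\lambda$, where $g\geq1$ is used so that the real lattice is nontrivial and nondegenerate, this forces $f$ to factor through $\pi$. The hardest part will be the density/irrationality claim for $\Lambda_\lambda$ for generic $\lambda$, which I would verify via the period matrix of $X$ and the explicit formula for the translations $\lambda\mapsto$ (periods of $\lambda\bar\omega+\omega$).
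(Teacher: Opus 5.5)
There is a genuine gap. Every mechanism you propose is local in $\lambda$ or fibrewise, but the conclusion fails locally over $\mathbb{P}^1$, so none of these steps can force $f$ to be constant on fibres.

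\textbf{Density is not available.} The density you single out as the hardest step does not exist. For every $\lambda$ the translation group $\Gamma_\lambda=\{(\gamma^{0,1},\lambda\gamma^{1,0}):\gamma\in\Gamma\}$ is discrete, uniformly so by the norm estimate in the proof of Proposition~\ref{prop:invariants}. For $\lambda\neq0$ it is a lattice in a totally real subspace $V_\lambda$ with $V_\lambda\cap iV_\lambda=0$. So the subspaces $V\cap iV$ you want to span everything are zero. Passing to a quotient by a complex subspace $U$ only helps if you already know $f$ is constant along $U$. The fibre $(\mathbb{C}^*)^{2g}$ is Stein and contains no compact tori. Indeed, writing $t_j=\ell_j(a+\lambda^{-1}b)$ with $(\ell_j)$ dual to an integral basis $(\gamma_j)$, the functions $e^{2\pi i t_j}$ are $\Gamma$-invariant holomorphic functions on $\pi^{-1}(\mathbb{C}^*)\cong\mathbb{C}^*\times(\mathbb{C}^*)^{2g}$.

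\textbf{The Jacobian at $\lambda=0$ gives no constraint.} Abelian varieties carry many nonconstant meromorphic functions. Moreover, $(\lambda,a,b)\mapsto a\bmod\Gamma^{0,1}$, the underlying holomorphic line bundle, is a holomorphic submersion $\pi^{-1}(\mathbb{C})\to\mathrm{Jac}(X)$. Hence the whole finite Hodge chart carries meromorphic functions that are nonconstant on every fibre, including $\pi^{-1}(0)$. Your claimed "Jacobian-independence near $\lambda=0$" is therefore false as a local statement.

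\textbf{The other tools give nothing either.} Limits along $\mathbb{C}^*$-orbits carry no information, since meromorphic functions need not extend continuously to orbit closures. The real structure $\lambda\mapsto-1/\bar\lambda$ (which is not the Deligne gluing $\lambda\mapsto\lambda^{-1}$) imposes nothing, since $f$ need not be compatible with it.

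\textbf{What is missing.} You need a global rigidity input that sees both Hodge charts at once. The functions above fail to be global only because they become essentially singular at the other end. The $e^{2\pi it_j}$ are essentially singular along $\lambda=0$ and $\lambda=\infty$. A pullback from $\mathrm{Jac}(X)$ becomes a function of $a=a_\infty/\mu$ near $\mu=0$.

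\textbf{The paper's route.} The paper supplies this input by lifting $f$ to the universal cover $E_{2g}^{\mathrm{an}}$, where $E_{2g}\cong\mathbb{P}^{2g+1}\setminus\mathbb{P}^{2g-1}$. Since the removed set has codimension two, every meromorphic function on $E_{2g}^{\mathrm{an}}$ extends to $\mathbb{P}^{2g+1}$, and is then rational by Chow's theorem. In the affine coordinates $t_1,\dots,t_{2g}$ over $\mathbb{C}(\lambda)$, the lattice acts by $t_j\mapsto t_j+1$. A rational function of period one in a variable does not depend on that variable, so the invariant field is $\mathbb{C}(\lambda)$. It is this algebraicity of meromorphic functions on the cover that turns discrete periodicity into constancy; periodicity alone is compatible with $e^{2\pi i t_j}$.
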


This gives a different value from that asserted in \cite[Proposition~4.2]{BH17}, where $\mathrm{trdeg}_{\mathbb{C}}\mathrm{Mer}(Z_X)=2g+1$ for $g\geq2$ is claimed, and Remark \ref{rem:moishezon} explains why the extension theorem used there does not apply.

In Section \ref{sec:rankone}, we prove that if $F:T\to Y^{\mathrm{an}}$ is a holomorphic map with finite fibers from a nonempty connected complex manifold $T$ to the analytification of a quasi-separated complex algebraic space $Y$ locally of finite type, where $Y^{\mathrm{an}}$ is assumed to exist, then
$\dim T\leq\mathrm{trdeg}_{\mathbb{C}}\mathrm{Mer}(T)$. Applying this inequality to direct sums of line $\lambda$-flat bundles gives the required obstruction for the full moduli spaces.

Since these direct sums are strictly polystable in higher rank, in Section \ref{sec:stable}, following \cite[Section~7]{HT03}, we use cyclic \'etale covers to construct families of direct images. Removing the loci with nontrivial deck stabilizers yields stability while preserving the meromorphic function field, so that, after correcting the determinant, the same obstruction applies. The quasi-separated hypothesis is essential to the assertion about algebraic spaces: in rank one, the lattice quotient defines a nowhere quasi-separated algebraic space whose associated analytic \'etale quotient is $Z_X$ (Remark \ref{rem:exotic-boundary}).

The non-algebraicity in Theorem \ref{thm:main} does not preclude the existence of a definable complex analytic
structure. In Section \ref{sec:definable}, we use the lattice description in rank one to construct finite semialgebraic complex analytic atlases in the sense of Bakker--Brunebarbe--Tsimerman \cite{BBT23}. Together with the relative symmetric product description of the Hodge moduli spaces on an elliptic curve \cite{FT17}, this gives the following result.

\begin{thm}\label{thm:definable}
Suppose that $g\geq1$. If $G=\mathrm{GL}(1,\mathbb{C})$ or $g=1$, then $\mathrm{TW}_{\rm DH}(X,G)$ admits an
$\mathbb{R}_{\mathrm{alg}}$-definable complex analytic structure for which the twistor projection is definable.
\end{thm}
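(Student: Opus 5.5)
The plan is to build a finite atlas of $\mathbb{R}_{\mathrm{alg}}$-definable charts in the sense of \cite{BBT23} by covering $\mathbb{P}^1$ with the two standard affine charts $\{\lambda\neq\infty\}$ and $\{\lambda\neq0\}$. Over each chart the Deligne--Hitchin space is the analytification of a Hodge moduli space: the one for $X$ over the first chart, and the one for the conjugate curve $\overline{X}$ over the second. Each is a complex algebraic variety, so its analytification carries a canonical $\mathbb{R}_{\mathrm{alg}}$-definable structure. The twistor projection is algebraic on each piece, hence definable. Everything therefore reduces to showing that the gluing map over $\mathbb{C}^*$, which is the Riemann--Hilbert/Deligne gluing, is definable for the chosen structures, after possibly refining each piece by finitely many definable charts.

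\emph{Rank one.} Here I would use the lattice description of $Z_X$ from \cite[proof of Theorem~4.3]{Sim97} and \cite[Section~6.7]{GX08}. Over $\mathbb{C}$, the Hodge moduli space of line $\lambda$-flat bundles is an affine bundle over $\mathrm{Pic}^0(X)\times\mathbb{C}$. Its analytification is a quotient $(\mathbb{C}^{2g}\times\mathbb{C})/\Lambda$ by a lattice $\Lambda\cong\mathbb{Z}^{2g}$ acting by translations whose dependence on $\lambda$ is affine-linear. The gluing with the $\overline{X}$ side is then given, in these coordinates, by an explicit real-linear map twisted by $\lambda\mapsto\lambda^{-1}$. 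I would take a bounded semialgebraic fundamental domain: a closed box in the real coordinates dual to $\Lambda$, slightly enlarged to an open box. Then, over a compact semialgebraic region in $\lambda$ (say $|\lambda|\le 2$ on one side and $|\lambda|\ge 1/2$ on the other), I would check three things. First, the charts are finitely many semialgebraic open subsets of $\mathbb{C}^{2g+1}$. Second, the identifications between overlapping translates are given by finitely many lattice translations, since the boxes are bounded and only finitely many lattice elements move a box to meet itself. Third, the gluing map, written in lattice coordinates, is real-algebraic; in fact it is affine in the real and imaginary parts and rational in $\lambda$. This yields a finite semialgebraic atlas, and $\pi$ is the coordinate $\lambda$, which is definable.

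\emph{Genus one, arbitrary $G$.} I would use \cite{FT17}: on an elliptic curve, the Hodge moduli space for $\mathrm{GL}(n,\mathbb{C})$ is the relative symmetric product $\mathrm{Sym}^n_{\mathbb{C}}$ of the rank one Hodge moduli space over $\mathbb{C}$. Compatibility with the gluing should then show that $\mathrm{TW}_{\rm DH}(X,\mathrm{GL}(n,\mathbb{C}))$ is the relative symmetric product $\mathrm{Sym}^n_{\mathbb{P}^1}(Z_X)$. For $\mathrm{SL}(n,\mathbb{C})$, it is the fiber over the trivial section of the relative determinant/sum map $\mathrm{Sym}^n_{\mathbb{P}^1}Z_X\to Z_X$. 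Definable analytic spaces are closed under finite products and fiber products over definable bases, and quotients by finite groups acting definably exist by \cite{BBT23}. So the rank one definable structure induces a definable structure on the relative symmetric product, and the closed subspace is definable as the fiber of a definable map.

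\emph{Main obstacle.} I expect the hardest step to be verifying that the identification $\mathrm{TW}_{\rm DH}\cong\mathrm{Sym}^n_{\mathbb{P}^1}Z_X$ holds as complex spaces, and not only fiberwise. This means checking that the \cite{FT17} isomorphisms for $X$ and $\overline{X}$ intertwine Simpson's gluing over $\mathbb{C}^*$. I would argue that both isomorphisms are given by taking direct sums of line $\lambda$-flat bundles, which is compatible with Riemann--Hilbert. Since the spaces are normal, it then suffices to compare the maps on a dense open set, namely regular semisimple sums.
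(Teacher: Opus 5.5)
\textbf{The reduction in your first paragraph is false.} Suppose each Hodge chart keeps the canonical $\mathbb{R}_{\mathrm{alg}}$-definable structure of the algebraic Hodge moduli space. Then for $g\geq1$ Deligne's gluing over $\mathbb{C}^*$ is never definable. If it were, \cite[Corollary~3.11]{BBT23} would make it and its inverse algebraic. The two Hodge schemes would then glue to a scheme locally of finite type algebraizing $\mathrm{TW}_{\rm DH}(X,G)$, contradicting Theorem~\ref{thm:main}; this is Remark~\ref{rem:def-scope}. ``Refining each piece by finitely many definable charts'' does not help, because adding charts compatible with a definable structure leaves the structure unchanged. So the statement you reduce to fails already in rank one. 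A proof must put definable structures on the Hodge charts that are \emph{not} the algebraic ones.

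Your rank one construction in fact does this, and it is the paper's Proposition~\ref{prop:def-rankone}. You should present it as replacing the algebraic structure, not refining it. The chart maps are the holomorphic coordinates $(\lambda,a,b)$ and $(\mu,a_\infty,b_\infty)$. The real coordinates $u=(a-\lambda\overline b)/(1+|\lambda|^2)$, on which $\Lambda$ acts by translation, and $v=(b+\lambda\overline a)/(1+|\lambda|^2)$, which is invariant, serve only to cut out the semialgebraic domains $u^{-1}(B)$ for bounded balls $B$. These domains are bounded in $u$ but not in $v$: $\Lambda$ has real rank $2g$ in fibers of real dimension $4g$, so no bounded fundamental domain exists. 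Boundedness in $u$ is all that the finiteness of lattice corrections on overlaps needs.

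With this correction, your genus one part matches Theorem~\ref{thm:def-elliptic}, with two omissions:
\begin{enumerate}
\item You must show the fiberwise tensor product is definable in the lattice atlas. This follows because sums of bounded balls are bounded, so again only finitely many lattice corrections occur.
\item You must identify the $\mathrm{SL}(n,\mathbb{C})$ Hodge space with the fiber over the unit section, compatibly with the $\mathfrak{S}_n$-quotient. The paper obtains this from exactness of invariants and a determinant-trivialization argument.
\end{enumerate}
Your ``main obstacle'' is resolved as you suggest. The \cite{Vac06} isomorphism is restriction to diagonal representations, hence direct sum, and direct sum commutes with Riemann--Hilbert. Pointwise agreement of holomorphic maps out of a reduced space already suffices, so normality is not needed.
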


The structures in Theorem \ref{thm:definable} are not
required to preserve the standard algebraic definable structures on the Hodge charts. We discuss their relation to Hitchin twistor spaces and the scope of the construction in Remarks \ref{rem:def-hitchin} and \ref{rem:def-scope}.

\section{Meromorphic functions and algebraization}\label{sec:rankone}\label{sec:obstructions}

As objects interpolating between Higgs bundles and flat bundles, $\lambda$-flat bundles were introduced by Deligne and subsequently developed by Simpson. They have since proved to be powerful tools in various areas of mathematics. We begin this section by collecting some basic definitions; for further details, we refer the reader to \cite{Sim94a,Sim94b,Sim97,Sim22,Sim24,Hua20a,Hua20b,HH22,HHZ24}.
A \emph{$\lambda$-flat bundle} on $X$ is a triple
$(E,\bar\partial_E,D^\lambda)$, where $\mathcal{E}:=(E,\bar\partial_E)$ is a holomorphic vector bundle, with $\bar\partial_E$ denoting the holomorphic structure, and 
\[
D^\lambda:\mathcal{E}\to\mathcal{E}\otimes K_X
\]
is a $\mathbb{C}$-linear morphism satisfying the $\lambda$-twisted Leibniz rule
\[
D^\lambda(fs)=fD^\lambda(s)+\lambda s\otimes df.
\]
On a curve, the integrability condition for $D^\lambda$ is automatic. When $\deg E=0$, the  $\lambda$-flat bundle is called \emph{semistable}  (respectively, \emph{stable}) if every nonzero proper $D^\lambda$-invariant holomorphic subbundle has degree at most zero (respectively, less than zero). It is called \emph{polystable} if it is a direct sum of stable
$\lambda$-flat bundles of degree zero.
A rank one object is a \emph{line $\lambda$-flat bundle}, which we denote by $L$ when its operators are understood. The trivial object is denoted by $\mathbf{1}=(X\times\mathbb{C},\bar\partial,\lambda\partial)$.

The Hodge moduli space $\mathcal{M}_{\mathrm{Hod}}(X,G)$ parametrizes S-equivalence classes of semistable $\lambda$-flat bundles of degree zero as $\lambda$ varies in $\mathbb{C}$; for $G=\mathrm{SL}(n,\mathbb{C})$, the determinant is fixed to be $\mathbf{1}$ \cite{Sim94a,Sim97}. At $\lambda=0$, these are Higgs bundles, with trace-free Higgs field in the fixed determinant case.

For $\lambda\ne0$, after rescaling $D^\lambda$ by $\lambda^{-1}$ and applying the Riemann--Hilbert correspondence
\cite[Theorem~9.11]{Sim94b}, the analytic Hodge moduli space restricted to
$\mathbb{C}^*$ is identified with
$\mathcal{M}_{\mathrm{B}}(X,G)^{\mathrm{an}}
\times\mathbb{C}^*$, namely,
\[
\mathcal{M}_{\mathrm{Hod}}(X,G)^{\mathrm{an}}
\big|_{\mathbb{C}^*}
\cong
\mathcal{M}_{\mathrm{B}}(X,G)^{\mathrm{an}}
\times\mathbb{C}^*.
\]
Here $\mathcal{M}_{\mathrm{B}}(X,G)$ is the Betti moduli space of $G$-local systems on $X$, also known as the character variety. Deligne's gluing identifies $([\rho],\lambda)$ in the Hodge moduli space of $X$ with $([\rho],\lambda^{-1})$ in that of $\overline X$, keeping the local system unchanged \cite[Section~3.2]{Sim08}, and hence yields the \emph{Deligne--Hitchin twistor space} $\mathrm{TW}_{\mathrm{DH}}(X,G)$, together with a morphism
\[
\pi:\mathrm{TW}_{\mathrm{DH}}(X,G)
\longrightarrow\mathbb{P}^1
\]
(see \cite[Section~4]{Sim97} and \cite[Section~2.2]{HHZ24}).

For $g\geq2$, let $\mathrm{M}_{\mathrm{sol}}(X,G)^s$ denote the moduli space of irreducible solutions to Hitchin's equations. It is a smooth hyperK\"ahler manifold, with trivial determinant connection for $G=\mathrm{SL}(n,\mathbb{C})$. Its \emph{Hitchin twistor space} $\mathrm{TW}_{\mathrm{H}}(X,G)^s$ is biholomorphic to $\mathrm{TW}_{\mathrm{DH}}(X,G)^s$ over $\mathbb{P}^1$ \cite[Theorem~4.2]{Sim97} (see also \cite[Proposition~2]{Sim22} and \cite[Sections~4.1--4.2]{BBHR26}). The negative inversion convention of \cite[Section~1.2]{Sim22} is converted to ours by simultaneously negating the parameter and the operator on the conjugate Hodge chart. This comparison identifies the preferred sections and is compatible with the condition on the fixed determinant; by contrast, the full Deligne--Hitchin twistor space may contain singular polystable points. Consequently, Theorem \ref{thm:main} also rules out an algebraization of this Hitchin twistor space.

Assume throughout this section that $g\geq1$, and let $\Gamma=2\pi iH^1(X,\mathbb{Z})\subset H^1(X,\mathbb{C})$. In the harmonic lattice description \cite[proof of Theorem~4.3]{Sim97} (see also \cite[Sections 6.3, 6.7]{GX08}), a line $\lambda$-flat bundle is represented by a triple $(X\times\mathbb{C},\bar\partial+a,\lambda\partial+b)$, where $a\in H^{0,1}(X)$ and $b\in H^{1,0}(X)$, and the remaining gauge equivalence is
\begin{align}\label{eq:lattice-action}
 (\lambda,a,b)\longmapsto
 (\lambda,a+\gamma^{0,1},b+\lambda\gamma^{1,0}),
 \qquad \gamma\in\Gamma.
\end{align}
For $\mu=\lambda^{-1}$, the same local system determines the triple $(\overline X\times\mathbb{C},\partial+\mu b,\mu\bar\partial+\mu a)$ on $\overline X$. After exchanging the two summands on this chart, the transition is given by $(a_\infty,b_\infty)=\mu(a,b)$, which realizes the algebraic total space as $E_{2g}=\mathrm{Tot}\bigl(\mathcal{O}_{\mathbb{P}^1}(1)^{\oplus2g}\bigr)$. Consequently, the rank one twistor space is
\[
Z_X:=\mathrm{TW}_{\mathrm{DH}}\bigl(X,\mathrm{GL}(1,\mathbb{C})\bigr)=E_{2g}^{\mathrm{an}}/\Gamma.
\]

We shall also need this construction for a nonzero rational sub-Hodge structure $W_{\mathbb{Q}}\subset H^1(X,\mathbb{Q})$, with $W^{p,q}=W_{\mathbb{C}}\cap H^{p,q}(X)$. If
$\dim_{\mathbb{Q}}W_{\mathbb{Q}}=2d$, then restriction of the transition to $W^{0,1}\oplus W^{1,0}$ defines
$E_W\cong\mathrm{Tot}(\mathcal{O}_{\mathbb{P}^1}(1)^{\oplus2d})$,
on which $\Gamma_W=2\pi i(W_{\mathbb{Q}}\cap H^1(X,\mathbb{Z}))$ acts via \eqref{eq:lattice-action}. Denote by $Z_W$ the analytic quotient and by
$\pi:Z_W\to\mathbb{P}^1$ its projection.

\begin{prop}\label{prop:invariants}
The quotient $Z_W$ is a connected complex manifold whose universal cover is $E_W^{\mathrm{an}}$. Moreover, pullback by $\pi$ identifies $\mathrm{Mer}(Z_W)$ with $\mathbb{C}(\lambda)$.
\end{prop}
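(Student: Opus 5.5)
The plan has two parts: first show that $Z_W$ is a manifold with universal cover $E_W^{\mathrm{an}}$, then use the lattice invariance on both Hodge charts to force every meromorphic function to depend on $\lambda$ alone.

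\emph{The quotient is a manifold with universal cover $E_W^{\mathrm{an}}$.} The key observation is that $\gamma\mapsto\gamma^{0,1}$ is injective on $\Gamma_W$. Indeed, $\gamma\in 2\pi i H^1(X,\mathbb{Q})$ satisfies $\bar\gamma=-\gamma$, so $\gamma^{1,0}=-\overline{\gamma^{0,1}}$. Hence $\gamma^{0,1}$ determines $\gamma$, and $L:=\{\gamma^{0,1}:\gamma\in\Gamma_W\}$ is a lattice of rank $2d$ in the $d$-dimensional space $W^{0,1}$. Write $\varphi(\ell):=-\bar\ell$ for the antilinear map $W^{0,1}\to W^{1,0}$ sending $\gamma^{0,1}$ to $\gamma^{1,0}$. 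On the chart $\lambda\in\mathbb{C}$, the action \eqref{eq:lattice-action} becomes $(\lambda,a,b)\mapsto(\lambda,a+\ell,b+\lambda\varphi(\ell))$, whose $a$-component is translation by the lattice $L$, independently of $\lambda$. This action is therefore free and properly discontinuous. On the chart $\mu=\lambda^{-1}$ the action reads $(a_\infty,b_\infty)\mapsto(a_\infty+\mu\ell,b_\infty+\varphi(\ell))$, and the same argument applies to the $b_\infty$-coordinate. Thus $Z_W$ is a connected complex manifold. Since $E_W^{\mathrm{an}}$ is the total space of a vector bundle over $\mathbb{P}^1$, it is simply connected, so it is the universal cover and $\pi_1(Z_W)=\Gamma_W$.

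\emph{Meromorphic functions: reduction to the $\lambda$-chart.} Let $f\in\mathrm{Mer}(Z_W)$ and let $F$ be its $\Gamma_W$-invariant pullback to $E_W^{\mathrm{an}}$. Fibrewise arguments will not suffice, since for generic $\lambda\in\mathbb{C}^*$ the fibre $W_{\mathbb{C}}/\Gamma_W$ is non-compact and carries many meromorphic functions. I would therefore use the invariance infinitesimally in $\lambda$.

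Near a generic point of $\pi^{-1}(0)$, expand $F=\sum_k\lambda^kF_k(a,b)$. At order $0$, the function $F_0$ is $L$-periodic in $a$. At order $1$, invariance gives
\[
F_1(a+\ell,b)-F_1(a,b)=-\partial_bF_0(a,b)\cdot\varphi(\ell).
\]
Differentiating in $a$ shows that $\partial_aF_1$ is $L$-periodic, hence constant in $a$ by compactness of $W^{0,1}/L$ (after clearing the polar divisor). It follows that the increment $\ell\mapsto F_1(a+\ell,b)-F_1(a,b)$ is $\mathbb{C}$-linear in $\ell$. The right-hand side is antilinear in $\ell$, so both sides vanish, giving $\partial_bF_0=0$ and $F_1$ periodic in $a$. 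Inducting on $k$ with the same linear-versus-antilinear comparison, the goal is to show that every $F_k$ is independent of $b$ and $L$-periodic in $a$. By the identity theorem this would give $\partial_bF\equiv0$ on the $\lambda$-chart, so $F=F(\lambda,a)$.

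\emph{Meromorphic functions: the $\mu$-chart and conclusion.} The symmetric argument on the $\mu$-chart, where the roles of $a$ and $b$ are exchanged, gives $\partial_{a_\infty}F\equiv0$ there. Combining the two, $F(\lambda,a)=\tilde F(\mu,b_\infty)$ with $a=a_\infty/\mu$. The right-hand side is independent of $a_\infty$, so $F$ does not depend on $a$. Hence $F=h(\lambda)$ with $h$ meromorphic on $\mathbb{P}^1$, that is, $h\in\mathbb{C}(\lambda)$. Conversely, every element of $\mathbb{C}(\lambda)$ pulls back to $Z_W$ via $\pi$, and pullback is injective because $\pi$ is surjective. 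This gives $\mathrm{Mer}(Z_W)=\pi^*\mathbb{C}(\lambda)$.

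The main obstacle is the inductive step for the higher $F_k$, and specifically controlling poles. At order $k$ the correction terms involve $b$-derivatives of all lower $F_j$, and the compactness argument requires the relevant functions to be holomorphic (or to have controlled polar divisors) on $W^{0,1}/L\times U$. If the pole issue becomes unmanageable, I would first try writing $f=P/Q$ locally and running the argument on the invariant line bundle data, or replace the power series by the linear-versus-antilinear comparison applied directly to $\partial_b F$ along the $a$-directions.
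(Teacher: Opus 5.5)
\textbf{There is a genuine gap in the meromorphic part.} Your first part is fine: the $a$-translation on the $\lambda$-chart and the $b_\infty$-translation on the $\mu$-chart give freeness and proper discontinuity, and the total space of a vector bundle over $\mathbb{P}^1$ is simply connected.

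The gap is not a technical matter of controlling poles. The intermediate claim you aim for, that every $\Gamma_W$-invariant meromorphic $F$ satisfies $\partial_bF\equiv0$ on the $\lambda$-chart, is false. For $W_{\mathbb{Q}}=H^1(X,\mathbb{Q})$, the part $\pi^{-1}(\mathbb{C})$ of $Z_X$ is the analytification of the algebraic Hodge moduli space, which has dimension $2g+1$. Its rational functions are invariant meromorphic functions on the $\lambda$-chart, and they cannot all be functions of $(\lambda,a)$ alone.

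Explicitly, take $d=1$ (e.g.\ $g=1$) and identify $W^{0,1}\cong W^{1,0}\cong\mathbb{C}$ so that the action is $(a,b)\mapsto(a+\ell,b-\lambda\bar\ell)$. Legendre's relation $\eta_1\omega_2-\eta_2\omega_1=2\pi i$ shows that $\ell\mapsto\ell$ and the quasi-periods $\ell\mapsto\eta(\ell)$ of the Weierstrass zeta function $\zeta$ of $L$ span all additive maps $L\to\mathbb{C}$. So there are $x,y\in\mathbb{C}$ with $h(a)=xa+y\zeta(a)$ satisfying $h(a+\ell)=h(a)+\bar\ell$ for all $\ell\in L$. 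Then $F=b+\lambda h(a)$ is invariant, with $F_0=b$ and $F_1=h$.

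This is exactly where your order-one step fails. Here $\partial_aF_1=x-y\wp$ is periodic but not constant. Compactness of $W^{0,1}/L$ forces constancy only for holomorphic periodic functions, and meromorphic quasi-periodic functions realize antilinear period characters, so the linear-versus-antilinear comparison has no force. Moreover, your claim that $\partial_aF_1$ is periodic already presupposes $\partial_a\partial_bF_0=0$, which again needs holomorphy of $F_0$.

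Consequently, no argument confined to one Hodge chart, let alone to a neighborhood of $\pi^{-1}(0)$, can succeed. Both charts must be used globally before any periodicity argument is applied.

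The paper does this by algebraizing the covering. Since $E_W\cong\mathbb{P}^{2d+1}\setminus\mathbb{P}^{2d-1}$, every meromorphic function on $E_W^{\mathrm{an}}$ extends across the codimension-two complement and is rational by Chow's theorem. Hence $q^*\mathrm{Mer}(Z_W)=\mathbb{C}(E_W)^{\Gamma_W}$. The translation vectors of an integral basis of $\Gamma_W$ form a $\mathbb{C}(\lambda)$-basis of the generic fiber. A rational function invariant under unit translation in each coordinate therefore lies in $\mathbb{C}(\lambda)$. In this algebraic setting, transcendental quasi-periodic functions such as $\zeta$ are excluded from the outset, which your local analysis cannot achieve.
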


\begin{proof}
For $\gamma\in\Gamma_W$, one has
$\gamma^{1,0}=-\overline{\gamma^{0,1}}$, and the projection of $iW_{\mathbb{R}}$ to $W^{0,1}$ is an isomorphism of real vector spaces, so $\Gamma_W^{0,1}$ is a lattice. With compatible Hodge
norms, the metric $(\lVert a\rVert^2+\lVert b\rVert^2)/(1+|\lambda|^2)$ on $E_W$ gives the translation section associated to $\gamma$
the constant norm $\lVert\gamma^{0,1}\rVert$. Then the action is free and properly discontinuous, including over zero and infinity, which proves that $Z_W$ is a complex manifold. The total space
$E_W^{\mathrm{an}}$ retracts onto $\mathbb{P}^1$, so it is simply connected and serves as the universal cover of $Z_W$.

For $m=2d$, a point of
$E_m=\mathrm{Tot}(\mathcal{O}_{\mathbb{P}^1}(1)^{\oplus m})$ is a line $\ell\subset\mathbb{C}^2$ together with a linear map $\varphi:\ell\to\mathbb{C}^m$.
For $0\ne u=(u_0,u_1)\in\ell$, the assignment $(\ell,\varphi)\mapsto[u_0:u_1:\varphi(u)]$ gives the standard identification \cite{Ver14}
$$
 E_m\cong\mathbb{P}^{m+1}\setminus\{u_0=u_1=0\}
 =\mathbb{P}^{m+1}\setminus\mathbb{P}^{m-1}.
$$
By the meromorphic extension theorem \cite[Chapter~II, Theorem~8.11]{Dem12}, every meromorphic function on $E_m^{\mathrm{an}}$ therefore extends across the boundary of codimension two. The extended function is rational by Chow's theorem applied to its graph \cite[Chapter~II, Theorem~8.10]{Dem12},
which gives $\mathrm{Mer}(E_m^{\mathrm{an}})=\mathbb{C}(E_m)$.

Let $q:E_W^{\mathrm{an}}\to Z_W$ be the quotient map. Since $q$ is a covering map, every $\Gamma_W$-invariant meromorphic function on $E_W^{\mathrm{an}}$ descends uniquely to $Z_W$. Together with the preceding extension argument, this gives
\[
q^*\mathrm{Mer}(Z_W)
=
\mathrm{Mer}(E_W^{\mathrm{an}})^{\Gamma_W}
=
\mathbb{C}(E_W)^{\Gamma_W}.
\]
It therefore remains to compute the invariant field on the right.

Let $K=\mathbb{C}(\lambda)$ and choose an integral basis $\gamma_1,\ldots,\gamma_m$ of $\Gamma_W$. Since these vectors form a complex basis of $W_{\mathbb{C}}$ and
$\xi\mapsto(\xi^{0,1},\lambda\xi^{1,0})$ is an isomorphism over $K$, the corresponding translation vectors form a $K$-basis of the generic fiber. In suitable affine coordinates $t_1,\ldots,t_m$, translation by $\gamma_j$ sends $t_j$ to $t_j+1$ and fixes the other coordinates. A rational function in one variable over a field of
characteristic zero cannot have period one unless it is constant: any finite pole would have infinitely many translates, while a nonconstant polynomial is not periodic. Applying this argument to each variable gives $\mathbb{C}(E_W)^{\Gamma_W}=K$, and hence $q^*\mathrm{Mer}(Z_W)=K$, as required.

\end{proof}

Taking $W_{\mathbb{Q}}=H^1(X,\mathbb{Q})$ then proves
Theorem \ref{thm:rankone}.

\begin{rem}\label{rem:moishezon}
The transcendence degree in Theorem \ref{thm:rankone} is one, contrary to the value $2g+1$ asserted for $g\geq2$ in \cite[Proposition~4.2]{BH17}. The theorem invoked at the final step of that proof requires simple connectedness \cite[Theorem~3.4]{Ver14}, so it does not apply to $Z_X$, whose fundamental group is $\Gamma\cong\mathbb{Z}^{2g}$. Indeed, the norm estimate above gives a sufficiently small tube around the zero section that maps injectively to $Z_X$. Choose a linear functional $\ell$ with $\ell(\gamma^{0,1})\ne0$ for some $\gamma\in\Gamma$. Then $\ell(a)$ is meromorphic on the covering,
with expression $\ell(a_\infty)/\mu$ at infinity, and defines a meromorphic function on the image of this tube. If it extended globally to $Z_X$, its pullback would coincide with $\ell(a)$ by the identity theorem, but this contradicts the fact that it changes by $\ell(\gamma^{0,1})$ under \eqref{eq:lattice-action}.
\end{rem}

For quasi-separated algebraic spaces we use analytification through
\'etale presentations whenever it exists
\cite[Sections~1.1, 1.5, 2.2]{CT09}. For nonseparated schemes, affine
analytifications are glued with non-Hausdorff analytic spaces allowed.
We now compare the dimension of an algebraic image with the field
of meromorphic functions on its source.

\begin{lem}\label{lem:pullback}
Let $T$ be a nonempty connected complex manifold and let
$F:T\to Y^{\mathrm{an}}$ be a holomorphic map, where $Y$ is a quasi-separated complex algebraic space locally of finite type whose analytification exists. Then the reduced Zariski closure $V$ of $F(T)$ is
irreducible, and pullback induces an injection
$\mathbb{C}(V)\hookrightarrow\mathrm{Mer}(T)$.
If $F$ has finite fibers, then
\[
\dim T\leq\mathrm{trdeg}_{\mathbb{C}}\,\mathrm{Mer}(T).
\]
\end{lem}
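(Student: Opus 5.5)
The plan is to reduce to the case where $Y$ is an affine scheme, using an étale presentation and the connectedness of $T$. Afterwards, dimension theory bounds $\dim T$ by $\dim V$.

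\textbf{Irreducibility of $V$.} First replace $Y$ by the reduced closed subspace $V$ itself. Since $T$ is reduced, $F$ factors through $V^{\mathrm{an}}$. Suppose $V=V_1\cup V_2$ with both $V_i$ proper closed subsets. The sets $F^{-1}(V_i^{\mathrm{an}})$ are analytic subsets of $T$ whose union is $T$. Because $T$ is a connected manifold, hence irreducible as a complex space, one of them equals $T$. Then $F(T)\subset V_i$, which contradicts the minimality of the Zariski closure. For algebraic spaces, "Zariski closed" means closed in $|Y|$. The same argument works with the closed subspaces $V_i$, whose analytifications exist as closed analytic subspaces of $Y^{\mathrm{an}}$ by \cite{CT09}.

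\textbf{Injection $\mathbb{C}(V)\hookrightarrow\mathrm{Mer}(T)$.} I would choose a dense open subspace $V^\circ\subset V$ that is an affine scheme. This is possible because a quasi-separated algebraic space has a dense open schematic locus, and one can shrink it to an irreducible affine open. Its complement $B=V\setminus V^\circ$ is a proper closed subset, so by the argument above $F^{-1}(B^{\mathrm{an}})$ is a proper analytic subset of $T$, hence nowhere dense. A rational function $f\in\mathbb{C}(V)$ is regular on some smaller dense affine open $U\subset V^\circ$, and again $F^{-1}(U^{\mathrm{an}})$ is dense with proper analytic complement. I would write $f=p/q$ with $p,q$ regular on $V^\circ$ and $q\neq0$. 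Then $F^*q\not\equiv 0$, since otherwise $F(T)\subset\{q=0\}$, contradicting density. Hence $F^*f:=F^*p/F^*q$ is meromorphic on all of $T$. This is because $V^\circ$ is affine, so $p$ and $q$ are restrictions of polynomial coordinates and their pullbacks are holomorphic on the open set $F^{-1}(V^{\circ\mathrm{an}})$. To get a global meromorphic function, the plan is to cover $V$ by finitely many affine étale charts near the image and glue these local quotients, which agree on overlaps by the identity theorem. The map $f\mapsto F^*f$ is a ring homomorphism of fields, hence injective.

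\textbf{Dimension bound.} Suppose $F$ has finite fibers. Restrict to the dense open set $T^\circ=F^{-1}(V^{\circ\mathrm{an}})$, where $F$ lands in an affine scheme $V^\circ\subset\mathbb{A}^N$. Then $F|_{T^\circ}$ is a holomorphic map with finite fibers into $(V^\circ)^{\mathrm{an}}$. By the semicontinuity of fiber dimension (Remmert's rank theorem), the generic rank of $F$ equals $\dim T$, and so $\dim T\le \dim V^{\mathrm{an}}=\dim V$. Finally, $\dim V=\mathrm{trdeg}_{\mathbb{C}}\mathbb{C}(V)\le\mathrm{trdeg}_{\mathbb{C}}\mathrm{Mer}(T)$, where the inequality comes from the injection just established.

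\textbf{Main obstacle.} The delicate step is globalizing $F^*f$ as a meromorphic function on all of $T$, rather than only on $T^\circ$. Meromorphic functions do not automatically extend across analytic subsets of codimension one. The key point is that near each point of $T$ the map $F$ lands in an étale chart $U_i\to V$ with $U_i$ affine. On such a chart, $f$ pulls back to a quotient of regular functions whose denominator does not vanish identically on any component of the preimage; for this one uses the irreducibility argument applied locally on the connected manifold $T$. The local quotients agree on the dense set $T^\circ$, so they glue. Quasi-separatedness is what guarantees that these charts and the analytification behave well, as in \cite{CT09}.
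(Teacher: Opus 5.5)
Your proof is correct and has the same skeleton as the paper's: the identity principle gives irreducibility and density of inverse images of nonempty opens, you use a dense open affine subscheme of $V$, and the dimension bound comes from the rank at a point of maximal rank. The difference is how you globalize $F^*f$.

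The paper avoids étale charts altogether. It pulls back the analytification of the reduced Zariski closure $\mathcal{G}_f\subset V\times\mathbb{P}^1$ of the graph of $f$ under $F\times\mathrm{id}_{\mathbb{P}^1}$. It then takes the union of those irreducible components of the inverse image not lying over $T\setminus T_0$, where $T_0=F^{-1}(U^{\mathrm{an}})$. This union is the closure of the graph of $f\circ F|_{T_0}$: a closed analytic set whose projection to $T$ is proper and an isomorphism over $T_0$, hence a meromorphic map $T\dashrightarrow\mathbb{P}^1$, i.e.\ a meromorphic function. That argument needs only the analytification of one closed subspace and is indifferent to the local presentation of $V^{\mathrm{an}}$.

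Your route instead uses the sheaf property of meromorphic functions and local lifts of $F$ through affine étale charts $\phi:U_i\to V$. It is more explicit, and the local step you sketch does go through:
\begin{enumerate}[(i)]
\item Write $\phi^*f=p_i/q_i$ with $q_i$ a non-zero-divisor. This is needed because $U_i$ can be reducible even though $V$ is irreducible, e.g.\ over a node.
\item Since $\phi$ maps generic points of $U_i$ to the generic point of $V$, the set $\phi(V(q_i))$ lies in a proper closed subset $C\subset V$.
\item If $\tilde F^*q_i$ vanished on a connected open $T_t$, then $F^{-1}(C^{\mathrm{an}})$ would contain $T_t$, hence equal $T$ by the identity principle, contradicting density.
\end{enumerate}

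Two small points. Your sentence claiming that $F^*p/F^*q$ is meromorphic on all of $T$ is only justified on $T^\circ$ until this gluing is done. Also, ``finitely many'' charts is neither needed nor always available, since $V$ need not be quasi-compact: a line with infinitely many origins is irreducible and quasi-separated but not quasi-compact. The gluing is local on $T$ anyway, so nothing is lost.
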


\begin{proof}
By \cite[Tag~03IQ]{Sta}, the reduced closed subspace structure on $V$ exists, and since $T$ is reduced, $F$ factors through $V^{\mathrm{an}}$. By the identity principle, the inverse image of every proper closed algebraic subset of $V$ has empty interior, which implies that $V$ is irreducible and that every nonempty algebraic open subset of $V$ has analytically dense inverse image.
By \cite[Tag~06NH]{Sta}, $V$ contains a dense open subscheme. Choose a nonempty smooth affine open $V_0\subset V$ and denote by $K=\mathbb{C}(V_0)=\mathbb{C}(V)$ its function field, so we have
$\dim V_0=\mathrm{trdeg}_{\mathbb{C}}K$.

Let $f\in K$ be regular on a nonempty open $U\subset V_0$, and let $\mathcal{G}_f\subset V\times\mathbb{P}^1$ be the reduced algebraic closure of its graph. Let $\mathcal{H}\subset T\times\mathbb{P}^1$ be the reduced analytic inverse image of $\mathcal{G}_f^{\mathrm{an}}$ under $F\times\mathrm{id}_{\mathbb{P}^1}$, and put
$T_0=F^{-1}(U^{\mathrm{an}})$. The closure of the graph of $f\circ F$ over $T_0$ is analytic, since locally it is the union of those irreducible components of $\mathcal{H}$ which are not contained in
$(T\setminus T_0)\times\mathbb{P}^1$. This closure is closed in $T\times\mathbb{P}^1$, so its projection to $T$ is proper and is an isomorphism over the dense open $T_0$. It therefore defines a meromorphic function
$F^*f$ on the manifold $T$ \cite[Section~7]{Iva13}. On dense inverse images of common domains, these pullbacks preserve field operations and nonvanishing, which gives
an injection $\mathbb{C}(V)\hookrightarrow\mathrm{Mer}(T)$.

Finally, on the nonempty open $F^{-1}(V_0^{\mathrm{an}})$, the constant-rank theorem, applied at a point of maximal rank, together with the assumption that the fibers are finite, yields $\dim T\leq\dim V_0$. Combined with the field injection, this proves the desired inequality.
\end{proof}

\begin{proof}[Proof of Theorem \ref{thm:main} for the full spaces] Direct sum and duality give holomorphic maps over $\mathbb{P}^1$
$$
 \begin{aligned}
 Z_X&\longrightarrow\mathrm{TW}_{\rm DH}(X,\mathrm{GL}(n,\mathbb{C})),
 &L&\longmapsto L\oplus\mathbf{1}^{\oplus(n-1)},\\
 Z_X&\longrightarrow\mathrm{TW}_{\rm DH}(X,\mathrm{SL}(n,\mathbb{C})),
 &L&\longmapsto L\oplus L^*\oplus\mathbf{1}^{\oplus(n-2)}.
 \end{aligned}
$$
Indeed, the harmonic parameters give local holomorphic families, whose direct sums and duals determine holomorphic classifying maps
by \cite[Corollary~5.6 and Lemma~5.7]{Sim94a} and \cite[Proposition~4.1 and the following remark]{Sim97}, and these maps agree under the Deligne--Hitchin gluing because the corresponding operations on local systems agree. Since every line $\lambda$-flat bundle is stable, uniqueness of the stable factors \cite[Section~3, pp.~89--90]{Sim94a} makes the first map injective and bounds the fibers of the second by two, corresponding to $L$ and $L^*$, at every parameter. If either target embedded holomorphically into an analytification as in Lemma \ref{lem:pullback}, then applying that lemma to the composite gives $2g+1=\dim Z_X\leq1$, contrary to $g\geq1$. Finally, every complex scheme locally of finite type is locally Noetherian and hence quasi-separated, which proves the non-algebraization assertion as well.
\end{proof}

\begin{rem}\label{rem:exotic-boundary}
For $g\geq1$, quasi-separatedness cannot be omitted if algebraic spaces are understood in the broader sense of \cite[Tag~025Y]{Sta}. Every nonzero lattice vector acts on $E_{2g}$ by translation
along a nowhere-zero algebraic section, so the action is free and its quotient
$Q_X=E_{2g}/\Gamma$ is a smooth algebraic space locally of finite type over $\mathbb{C}$ \cite[Tag~02Z2]{Sta}.
For a nonempty open $Q'\subset Q_X$, its inverse image $E'$ is quasi-compact because $E_{2g}$ is Noetherian, whereas $E'\times_{Q'}E'=\coprod_{\gamma\in\Gamma}E'$ is not. Thus the diagonal of $Q'$ is not quasi-compact.

The associated analytic \'etale quotient sheaf is represented by $Z_X$, independently of the presentation. Indeed, for an affine \'etale chart $U\to Q_X$, the scheme $W=U\times_{Q_X}E_{2g}$ is \'etale over both factors and surjective over $U$. It is separated, since its map to the separated scheme $U\times E_{2g}$ is a monomorphism. The map $W^{\mathrm{an}}\to Z_X$ descends to a local isomorphism $U^{\mathrm{an}}\to Z_X$, because its two pullbacks to $(W\times_U W)^{\mathrm{an}}$ agree. For two charts, the comparison map
$(U\times_{Q_X}U')^{\mathrm{an}}
\to U^{\mathrm{an}}\times_{Z_X}(U')^{\mathrm{an}}$ is a local isomorphism and is bijective on points, since
$Q_X(\mathbb{C})=E_{2g}(\mathbb{C})/\Gamma$.
It is therefore an isomorphism, which proves the assertion. This realization through analytic \'etale quotients lies outside the quasi-separated convention for ordinary analytification in
\cite[Section~1.5]{CT09}.
\end{rem}

\section{The stable fixed determinant locus}\label{sec:stable}

Assume that $g\geq2$ and $n\geq2$, and choose a connected cyclic \'etale cover $p:Y\to X$ of degree $n$, with deck group $\Delta$, as in \cite[Section~7]{HT03}. Then $Y$ is a smooth complex projective curve of genus $h=1+n(g-1)$. The trace map respects the Hodge decomposition and satisfies $\mathrm{Tr}_p\circ p^*=n$, so
$W_{\mathbb{Q}}=\ker(\mathrm{Tr}_p:H^1(Y,\mathbb{Q})\to H^1(X,\mathbb{Q}))$ has dimension $2d$, where $d=(n-1)(g-1)$.
Let $\mathcal{P}=E_W^{\mathrm{an}}/\Gamma_W$ be the quotient in Proposition \ref{prop:invariants}, where
$\Gamma_W=2\pi i(W_{\mathbb{Q}}\cap H^1(Y,\mathbb{Z}))$ and
$E_W\cong\mathrm{Tot}(\mathcal{O}_{\mathbb{P}^1}(1)^{\oplus2d})$.
This is a connected complex manifold of dimension $2d+1$ with
$\mathrm{Mer}(\mathcal{P})=\mathbb{C}(\lambda)$.

The induced map $\mathcal{P}\to Z_Y$ is injective. Indeed, if two representatives with trace zero differ by an integral translation $\gamma$, then the translation section of $\mathrm{Tr}_p\gamma$ vanishes at their parameter, so the norm computation in the proof of Proposition \ref{prop:invariants} gives $\mathrm{Tr}_p\gamma=0$ and hence
$\gamma\in\Gamma_W$. The deck group $\Delta$ preserves $W_{\mathbb{Q}}$ and
$\Gamma_W$, and hence acts holomorphically on $\mathcal{P}$ over $\mathbb{P}^1$. Denote by $B=\bigcup_{1\ne\tau\in\Delta}\mathcal{P}^{\tau}$ the union of the fixed loci for the action on line $\lambda$-flat bundles,
and let $T=\mathcal{P}\setminus B$.

\begin{lem}\label{lem:fixed-loci}
The manifold $T$ is nonempty and connected, and satisfies
$\mathrm{Mer}(T)=\mathbb{C}(\lambda)$.
\end{lem}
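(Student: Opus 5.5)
The plan is to show that $B$ is a proper closed analytic subset of $\mathcal{P}$ of codimension at least two. Nonemptiness and connectedness of $T$ then follow from standard facts about complements of analytic subsets in a connected manifold. The identity $\mathrm{Mer}(T)=\mathbb{C}(\lambda)$ will then follow from a Levi-type extension theorem, as in the proof of Proposition \ref{prop:invariants}.

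\textbf{Step 1: the fixed loci are closed submanifolds of codimension at least two.} Each $\tau\ne1$ in $\Delta$ acts on $\mathcal{P}$ by a holomorphic automorphism of finite order. The fixed locus of such an automorphism of a complex manifold is a closed complex submanifold, each component of which is locally the fixed space of the linearized action. So it suffices to compute dimensions fibrewise over $\mathbb{P}^1$.

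Over a parameter $\lambda$, the fibre of $\mathcal{P}$ is the torus $(W^{0,1}\oplus W^{1,0})/\Gamma_W$, with the twisted embedding. The action of $\tau$ is induced by the linear map $\tau^*$ on $W_{\mathbb{C}}$. Its fixed locus in the fibre is a finite union of translates of the subtorus tangent to $W_{\mathbb{C}}^{\tau}$.

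Now $\Delta$ is cyclic of order $n$, and $H^1(Y,\mathbb{C})$ decomposes into the character spaces for $\Delta$. The trivial character space is $p^*H^1(X,\mathbb{C})$, which is exactly the complement of $W$. Hence $W_{\mathbb{C}}$ is the sum of the nontrivial character spaces.

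By Chevalley--Weil, or by Riemann--Roch for the flat line bundles attached to nontrivial characters (as in \cite[Section~7]{HT03}), each nontrivial character space has dimension $2(g-1)$. For $\tau\ne1$, the invariants $W_{\mathbb{C}}^{\tau}$ are the sum of the character spaces $\chi\ne1$ with $\chi(\tau)=1$. This sum misses at least one nontrivial character, so it has codimension at least $2(g-1)\geq2$ in $W_{\mathbb{C}}$.

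The action covers the identity on $\mathbb{P}^1$. Therefore, over a neighbourhood of any fixed point, $\mathcal{P}^{\tau}$ has codimension equal to the fibre codimension, which is at least $2$. The same holds at $\lambda=0,\infty$, because the action is linear on the total space $E_W$ and commutes with the $\mathcal{O}(1)$ transition. The union $B$ is finite, so $B$ is a closed analytic subset of codimension at least two.

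\textbf{Step 2: nonemptiness, connectedness, and the function field.} Since $\operatorname{codim}B\geq2>0$ and $\mathcal{P}$ is connected, $T=\mathcal{P}\setminus B$ is dense. Removing an analytic subset of codimension at least one from a connected manifold keeps it connected, so $T$ is nonempty and connected.

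By the meromorphic extension theorem \cite[Chapter~II, Theorem~8.11]{Dem12}, every meromorphic function on $T$ extends across $B$ because $B$ has codimension at least two. The extension is unique by density. Thus restriction gives an isomorphism $\mathrm{Mer}(\mathcal{P})\cong\mathrm{Mer}(T)$. Proposition \ref{prop:invariants} gives $\mathrm{Mer}(\mathcal{P})=\mathbb{C}(\lambda)$, and $\pi$ restricted to $T$ is still nonconstant, so $\mathrm{Mer}(T)=\mathbb{C}(\lambda)$.

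\textbf{Main obstacle.} The delicate point is the codimension count in Step 1. We need $\dim W_\chi=2(g-1)$ for each nontrivial $\chi$, which uses that the cover is étale and connected. We also need to confirm that the fixed loci stay of codimension at least two over $\lambda=0,\infty$ in the twisted family. For the first, I would compute $\dim H^1(X,L_\chi)=2g-2$ via Euler characteristic, since $H^0=H^2=0$ for a nontrivial local system $L_\chi$. For the second, I would verify directly that $\tau^*$ acts linearly on $E_W$, fibrewise over $\mathbb{P}^1$.
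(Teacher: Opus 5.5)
Your proposal is correct and follows essentially the paper's route. Both arguments show that each $\mathcal{P}^\tau$ has codimension $\dim W_{\mathbb{C}}-\dim W_{\mathbb{C}}^\tau\geq2$ and then conclude by connectedness of the complement and Levi extension; you get the dimension count from the isotypic decomposition (each nontrivial character space has dimension $2(g-1)$), while the paper uses $H^1(Y,\mathbb{C})^\tau\cong H^1(Y/\langle\tau\rangle,\mathbb{C})$ together with exactness of $\langle\tau\rangle$-invariants on the tangent sequence.

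One small correction: the fibres are not compact tori. They are $W_{\mathbb{C}}/\Gamma_W\cong(\mathbb{C}^*)^{2d}$ for $\lambda\neq0,\infty$ and $W^{0,1}/\Gamma_W^{0,1}\times W^{1,0}$ at $\lambda=0$. This is harmless, since you only use that $\tau$ acts on vertical tangent spaces by $\tau^*$ on $W_{\mathbb{C}}$, plus $\dim_z\mathcal{P}^\tau\leq\dim_z\bigl(\mathcal{P}^\tau\cap\pi^{-1}(\pi(z))\bigr)+1$ to pass from fibres to the total space.
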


\begin{proof}
Let $\tau\in\Delta$ have order $e>1$. Since
$H^1(Y,\mathbb{C})^\tau\cong H^1(Y/\langle\tau\rangle,\mathbb{C})$ and the trace is surjective on this invariant part, we have
$$
\dim W_{\mathbb{C}}^\tau=2(n/e-1)(g-1).
$$
For $z\in\mathcal{P}^\tau$ and $t=\pi(z)$, the tangent sequence
$$
0\longrightarrow(E_W)_t\longrightarrow T_z\mathcal{P}\longrightarrow T_t\mathbb{P}^1\longrightarrow0
$$
is an exact sequence of $\langle\tau\rangle$-representations. Its vertical term is isomorphic to $W_{\mathbb{C}}$ as a representation, since lattice translations have identity differential in the fiber directions, and its quotient is trivial. Taking invariants and using finite order holomorphic linearization therefore gives
$$
\mathrm{codim}_{\mathcal{P}}\mathcal{P}^\tau
=\dim W_{\mathbb{C}}-\dim W_{\mathbb{C}}^\tau =2(n-n/e)(g-1)\geq2.
$$
This holds at every parameter, including zero and infinity. Thus $B$ is a closed analytic subset of codimension at least two, so its complement $T$ is nonempty and connected. Meromorphic extension across $B$ gives $\mathrm{Mer}(T)=\mathrm{Mer}(\mathcal{P})=\mathbb{C}(\lambda)$.
\end{proof}

For a fixed parameter, semistable degree zero $\lambda$-flat bundles form an abelian category whose simple objects are the stable ones \cite[Section~3]{Sim94a}, so a subobject of a direct sum of pairwise non-isomorphic line objects is the sum of a subset of those factors.

\begin{lem}\label{prop:direct-image-stable}
Let $L$ and $L'$ be line $\lambda$-flat bundles of degree zero on $Y$. Then $p_*L$ is semistable, and it is stable when the $\Delta$-orbit of $L$ is free. Moreover, $p_*L\cong p_*L'$ implies $L'\cong\sigma^*L$ for some $\sigma\in\Delta$.
\end{lem}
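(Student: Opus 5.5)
The plan is to reduce everything to the pullback $p^*p_*L$ and the semisimple structure recalled just before the statement. First I would check that pushforward makes sense for $\lambda$-flat bundles. Since $p$ is étale, $K_Y=p^*K_X$, so $D^\lambda$ on $L$ induces a $\lambda$-connection on the rank $n$ bundle $p_*L$. Its degree is zero: for a finite étale cover, $\deg p_*L=\deg L$, because $\deg$ of the pushforward differs from $\deg L$ only by the ramification/canonical correction, which vanishes here. Next I would use the standard isomorphism of $\lambda$-flat bundles
\[
p^*p_*L\cong\bigoplus_{\sigma\in\Delta}\sigma^*L,
\]
which holds because $Y\times_XY\cong\coprod_{\sigma\in\Delta}Y$ for a Galois étale cover, compatibly with the operators by base change.

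For semistability, I would take a $D^\lambda$-invariant subbundle $F\subset p_*L$. Then $p^*F$ is an invariant subbundle of $\bigoplus\sigma^*L$, which is semistable of degree zero as a direct sum of line objects of degree zero. Hence $\deg p^*F=n\deg F\le0$. For stability, assume the orbit is free, so the $n$ line objects $\sigma^*L$ are pairwise non-isomorphic. If $\deg F=0$, then $p^*F$ is a degree zero subobject in the abelian category of semistable degree zero objects. By the remark preceding the lemma, $p^*F=\bigoplus_{\sigma\in S}\sigma^*L$ for some subset $S\subset\Delta$.

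The main obstacle is showing that $S$ is empty or all of $\Delta$. Here I would use that $p^*F$ is $\Delta$-equivariant, since it descends to $F$ and so is preserved by the canonical $\Delta$-linearization on $p^*p_*L$. This linearization permutes the summands $\sigma^*L$ via $\tau^*\sigma^*L=(\sigma\tau)^*L$, so it acts simply transitively on the index set. A $\Delta$-stable subset of a simply transitive $\Delta$-set is empty or everything, so $F=0$ or $F=p_*L$. This proves stability.

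For the last claim, suppose $p_*L\cong p_*L'$. Pulling back gives $\bigoplus_\sigma\sigma^*L\cong\bigoplus_\sigma\sigma^*L'$. By uniqueness of the stable factors in the semisimple category (Jordan–Hölder, as cited from \cite{Sim94a}), $L'$ is isomorphic to some factor $\sigma^*L$. This argument works at every $\lambda$, including $\lambda=0$, since it is purely categorical.
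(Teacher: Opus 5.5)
Your proposal is correct and follows essentially the same route as the paper's proof. Both arguments use the sheet decomposition $p^*p_*L\cong\bigoplus_{\sigma\in\Delta}\sigma^*L$ and Riemann--Roch for $\deg p_*L=0$, then the bound $n\deg A=\deg p^*A\leq0$ and descent forcing the subset of distinct simple factors to be $\Delta$-invariant, and finally uniqueness of the stable factors for the last claim.
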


\begin{proof}
The canonical isomorphism $K_Y\cong p^*K_X$ induces a $\lambda$-flat bundle structure on $p_*L$, and the sheet decomposition gives
\begin{align}\label{eq:galois-splitting}
p^*p_*L\cong\bigoplus_{\sigma\in\Delta}\sigma^*L.
\end{align}
Riemann--Roch gives $\deg p_*L=0$, and the semistability of the right-hand side implies that every invariant subbundle $A\subset p_*L$ satisfies $n\deg A=\deg p^*A\leq0$. If the orbit is free and $\deg A=0$, then $p^*A$ is a subobject in the preceding abelian category, hence the sum of a subset of the distinct simple factors in
\eqref{eq:galois-splitting}. Descent makes that subset invariant under the transitive deck action, so $A$ is zero or all of $p_*L$, which proves stability. Finally, an isomorphism of direct images identifies the multisets of simple factors after pullback, and hence $L'$ is a deck transform of $L$.
\end{proof}

Let $\delta=\mathrm{det}(p_*\mathbf{1})$ be the sign character of the cover. Since the unitary character group is $\mathrm{U}(1)^{2g}$, there is a unitary line local system $M$ with a fixed isomorphism $M^{\otimes n}\cong\delta^{-1}$. We also denote by $M$ its preferred twistor section, whose operator in each Hodge chart is the chart parameter times the corresponding part of its flat unitary connection \cite[Section~4]{Sim97}.

\begin{prop}\label{prop:prym-map}
The assignment $L\mapsto M\otimes p_*L$ defines a holomorphic map $\Phi:\mathcal{P}\to\mathrm{TW}_{\rm DH}(X,\mathrm{SL}(n,\mathbb{C}))$ over $\mathbb{P}^1$, whose restriction to $T$ takes values in the stable locus and has fibers of cardinality at most $n$.
\end{prop}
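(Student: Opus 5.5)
We prove the proposition in four steps: the map is well defined with trivial determinant, it is holomorphic on each Hodge chart, the charts agree under the Deligne--Hitchin gluing, and then stability and the fiber bound follow from Lemma \ref{prop:direct-image-stable}.

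\emph{Step 1: the determinant is trivial.} For a line $\lambda$-flat bundle $L$ on $Y$ represented by a point of $\mathcal{P}$ over the parameter $\lambda$, the direct image $p_*L$ is a rank $n$ $\lambda$-flat bundle on $X$. Its determinant is
\[
\det(p_*L)\cong\delta\otimes\mathrm{Nm}_p(L),
\]
where we use the Hodge-chart incarnation of $\delta$, namely the $\lambda$-flat bundle attached to this unitary character. The trace-zero condition defining $W_{\mathbb{Q}}$ means that the norm $\mathrm{Nm}_p(L)$ is trivial. This holds on the level of the harmonic parameters $(a,b)$, since the norm acts by $\mathrm{Tr}_p$ on $H^{0,1}\oplus H^{1,0}$. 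Since $M^{\otimes n}\cong\delta^{-1}$, we get $\det(M\otimes p_*L)\cong\mathbf{1}$. By Lemma \ref{prop:direct-image-stable}, $p_*L$ is semistable of degree zero, so tensoring with the degree zero line object $M$ gives a point of the fixed-determinant Hodge moduli space.

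\emph{Step 2: holomorphy on each chart.} The harmonic parameters give a holomorphic family of line $\lambda$-flat bundles over $E_W^{\mathrm{an}}$ restricted to the $\lambda$-chart. Its direct image under $p\times\mathrm{id}$ is again a holomorphic family, because $p$ is finite étale. Tensoring with the family given by the preferred section $M$ keeps it holomorphic. By \cite[Corollary~5.6 and Lemma~5.7]{Sim94a} and \cite[Proposition~4.1]{Sim97}, this family yields a holomorphic classifying map to $\mathcal{M}_{\mathrm{Hod}}(X,\mathrm{SL}(n,\mathbb{C}))^{\mathrm{an}}$. The map is invariant under $\Gamma_W$, since lattice translates give isomorphic line objects, so it descends to $\mathcal{P}$ over $\mathbb{C}$. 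The same construction applies on the $\mu$-chart with $\overline{Y}\to\overline{X}$.

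\emph{Step 3: compatibility with the gluing.} Over $\mathbb{C}^*$, both chart maps send the point to the local system $M\otimes p_*\rho_L$. This is because direct image and tensor product commute with the Riemann--Hilbert correspondence, and $M$ corresponds to the same unitary local system in both charts. Deligne's gluing keeps the local system unchanged, so the two chart maps agree and define $\Phi$ over $\mathbb{P}^1$. I expect this to be the main obstacle. The key point is that the preferred section $M$ really is a holomorphic section of the rank one twistor space on $X$ whose local system is constant. That follows from its description by chart parameters in the lattice picture, since the pair $(\lambda\bar\alpha,\lambda\alpha)$ is a section of $\mathcal{O}(1)^{\oplus2g}$.

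\emph{Step 4: stability and the fiber bound.} On $T$, the $\Delta$-stabilizer of $L$ is trivial, so its $\Delta$-orbit is free. Lemma \ref{prop:direct-image-stable} then says that $p_*L$ is stable, and so is $M\otimes p_*L$. For the fibers, suppose $\Phi(z)=\Phi(z')$. These points lie over the same parameter. Cancelling $M$ gives $p_*L\cong p_*L'$, so by the lemma $L'\cong\sigma^*L$ for some $\sigma\in\Delta$. The map $\mathcal{P}\to Z_Y$ is injective, so isomorphism classes determine points of $\mathcal{P}$. Hence $z'$ lies in the $\Delta$-orbit of $z$, which has at most $n$ elements. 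At $\lambda=0$ and $\mu=0$ the same reasoning applies verbatim, because the abelian-category argument holds for every parameter.
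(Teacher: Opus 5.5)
Your proposal is correct and follows the paper's proof step by step. The one difference is in Step~2: you descend the $\Gamma_W$-invariant classifying map from $E_W^{\mathrm{an}}$, whereas the paper normalizes the unitary gauges to $\mathrm{Nm}_p(g_j)=1$ so that the families descend together with their determinant trivializations. Your route works because scalars act on a determinant trivialization by $n$th powers, so the point of the fixed-determinant moduli space does not depend on that trivialization; you should state this explicitly, since it is what makes the classifying map pointwise $\Gamma_W$-invariant.

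There is one slip in Step~3. If $M$ has flat unitary connection $d+\alpha$, its preferred section in the $\lambda$-chart is the translation section $(a,b)=(\alpha^{0,1},\lambda\alpha^{1,0})$. The pair $(\lambda\bar\alpha,\lambda\alpha)$ that you wrote is not this section, and its local system varies with $\lambda$.
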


\begin{proof}
On each Hodge chart of $E_W$, the triples
$(Y\times\mathbb{C},\bar\partial_Y+a,\lambda\partial_Y+b)$ form a holomorphic relative family whose norm is trivial, since $\mathrm{Tr}_p a=\mathrm{Tr}_p b=0$.
Finite \'etale direct image commutes with analytic base change, and the sheet decomposition gives, compatibly with the $\lambda$-flat structures,
\begin{align}\label{eq:determinant-correction}
\mathrm{det}(M\otimes p_*L)
\cong M^{\otimes n}\otimes\mathrm{Nm}_p(L)\otimes\delta
\cong\mathbf{1}.
\end{align}
The resulting families are semistable by
Lemma~\ref{prop:direct-image-stable}, since tensoring by $M$ preserves semistability.

Choose harmonic representatives of an integral basis $\gamma_j$ of $\Gamma_W$ and unitary gauges $g_j$ with $g_j^{-1}dg_j=\gamma_j$. The traces of these harmonic forms vanish, so
$d\log\mathrm{Nm}_p(g_j)=\mathrm{Tr}_p\gamma_j=0$ and the functions $\mathrm{Nm}_p(g_j)$ are constant. After rescaling the $g_j$ by unitary constants, we may assume that $\mathrm{Nm}_p(g_j)=1$. Extending these gauges multiplicatively to $\Gamma_W$ gives compatible lattice
identifications on both Hodge charts which preserve the determinant trivializations in
\eqref{eq:determinant-correction}. By the analytic moduli property used above, the local families therefore define holomorphic maps on the two Hodge charts of $\mathcal{P}$. These maps glue because over nonzero parameters both constructions give the same induced local system on $X$.

By Lemma~\ref{prop:direct-image-stable}, the restriction to $T$ takes values in the stable locus, and equality of two images implies $L'\cong\sigma^*L$ for some
$\sigma\in\Delta$. Together with the injectivity of $\mathcal{P}\to Z_Y$, this gives the bound $n$ on every fiber.
\end{proof}

\begin{proof}[Proof of Theorem \ref{thm:main} for the stable loci] 
Since $\dim T=2(n-1)(g-1)+1>1$ and
$\mathrm{Mer}(T)=\mathbb{C}(\lambda)$, composing $\Phi|_T$ with a hypothetical embedding contradicts Lemma \ref{lem:pullback}, which proves the assertion for the stable $\mathrm{SL}(n,\mathbb{C})$ Deligne--Hitchin twistor space. The same argument using deck orbits gives finite fibers after forgetting the determinant, which proves the assertion for the stable $\mathrm{GL}(n,\mathbb{C})$ Deligne--Hitchin twistor space. Since every rank one object is stable, the remaining case follows from Proposition \ref{prop:invariants} and Lemma \ref{lem:pullback}.
\end{proof}

\begin{rem}\label{rem:exceptions}
For genus zero and $G=\mathrm{GL}(n,\mathbb{C})$ or $\mathrm{SL}(n,\mathbb{C})$, the equality $\pi_1(\mathbb{P}^1)=1$ and the Riemann--Hilbert isomorphism \cite[Theorem~9.11]{Sim94b} make the de Rham moduli space a reduced point. Since the Hodge moduli space is separated, the trivial family gives a closed section, which is an isomorphism
over $\mathbb{C}^{*}$. Its defining ideal is locally killed by powers of $\lambda$, whereas flatness of the parameter map \cite[Corollary~9.2]{Sim97} makes multiplication by $\lambda$ injective, so the ideal vanishes. Thus the Hodge moduli space is $\mathbb{A}^1$ and the Deligne--Hitchin twistor space
is $\mathbb{P}^1$. For genus one, commuting monodromy matrices have a common eigenvector, so the stable loci of rank greater than one are empty, including the Higgs fibers by \cite[Corollary~1.3]{Sim92}. Finally, $\mathrm{TW}_{\rm DH}(X,\mathrm{SL}(1,\mathbb{C}))=\mathbb{P}^1$.
\end{rem}

\begin{rem}\label{rem:open-curves}
For an open curve, Simpson gave a rank two construction using logarithmic $\lambda$-flat bundles and a Hecke gauge groupoid \cite{Sim22}, which extends to higher rank as an analytic groupoid under the hypotheses in \cite[Theorem~1.1]{Sim24}. Algebraic examples occur in this setting. For $\mathbb{P}^1\setminus\{0,\infty\}$, the rank one logarithmic model with trivial extension has coordinates $(\lambda,\alpha)$ and transition $(\lambda,\alpha)\mapsto(\lambda^{-1},-\lambda^{-2}\alpha)$, so it is $\mathrm{Tot}(\mathcal{O}_{\mathbb{P}^1}(2))$
\cite[Sections~4.1--4.2]{Sim08}. This algebraic model retains the logarithmic extension and must be distinguished from its further quotient by meromorphic gauge transformations \cite[Sections 4.4, 5.1]{Sim08}, \cite[Section~1.3]{Sim22}.

There is also an algebraic example after taking the Hecke quotient. For rank one on
$\mathbb{A}^1=\mathbb{P}^1\setminus\{\infty\}$, fix a framing at one point and allow logarithmic extensions of every degree, as an application of the construction in \cite[Sections~2.1, 3.2, 3.3]{Sim24}. On each $\mathcal{O}_{\mathbb{P}^1}(k)$, extending
$\lambda d$ gives the unique logarithmic $\lambda$-flat bundle structure, since
$H^0(\mathbb{P}^1,K_{\mathbb{P}^1}(\infty))=0$. The hypotheses in \cite[Hypothesis~2.1]{Sim24} are automatic in this case, since endomorphisms are scalars and the Higgs spectral curve is the zero section.

For an algebraic family over a parameter space $S$, the degree is locally constant, and on a locus where it equals $k$, the underlying line bundle has the form
\[
\mathcal{L}\cong
\mathrm{pr}_1^*\mathcal{O}_{\mathbb{P}^1}(k)
\otimes\mathrm{pr}_2^*N
\]
for a line bundle $N$ on $S$. The framing trivializes $N$ \cite[Tag~0B9R]{Sta}, and the same vanishing after base change gives uniqueness of the relative logarithmic
$\lambda$-flat bundle structure. These families therefore form copies of $\mathbb{A}^1_\lambda$ indexed by
$k\in\mathbb{Z}$, and Hecke modifications shift $k$ freely and transitively, so their framed groupoid quotient is $\mathbb{A}^1_\lambda$. The two quotient charts glue to $\mathbb{P}^1$ by parameter inversion. These open curve constructions are outside the scope of Theorem \ref{thm:main}.
\end{rem}

\section{Definable structures}\label{sec:definable}

In this section, we consider \emph{definable complex analytic spaces} in the sense of \cite[Section~2]{BBT23} for the o-minimal structure $\mathbb{R}_{\mathrm{alg}}$, so that definable means semialgebraic, with real parameters allowed. A \emph{finite semialgebraic complex analytic atlas} has finitely many charts modeled on complex analytic subspaces of semialgebraic open subsets of complex affine spaces, cut out by finitely many semialgebraic holomorphic functions. The overlaps and holomorphic transition maps are also required to be
semialgebraic.

\begin{prop}\label{prop:def-rankone}
The quotient $Z_W$ in Proposition \ref{prop:invariants} admits a finite semialgebraic complex analytic atlas for which the projection to $\mathbb{P}^1$ and the fiberwise group operations are definable. The maps induced by morphisms of integral Hodge structures are definable in these atlases.
\end{prop}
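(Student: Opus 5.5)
We construct the atlas by restricting the quotient map $q:E_W^{\mathrm{an}}\to Z_W$ to bounded fundamental domains over finitely many compact pieces of $\mathbb{P}^1$. Since translations are not semialgebraic on all of $E_W$, each chart must stay inside a bounded region on which $q$ is injective.

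\textbf{Real trivialization.} First we trivialize the lattice action as a real-linear one. Over the chart $\lambda\in\mathbb{C}$, write a point of the fiber $W^{0,1}\oplus W^{1,0}$ in real coordinates relative to the translation vectors. The map
\[
\xi\mapsto(\xi^{0,1},\lambda\xi^{1,0}),\qquad \xi\in W_{\mathbb{C}},
\]
is complex linear in $\xi$. Its composition with the real isomorphism $iW_{\mathbb{R}}\to W^{0,1}$ from the proof of Proposition~\ref{prop:invariants} gives, for each $\lambda$, a real-linear isomorphism
\[
A_\lambda:\mathbb{R}^{2d}\to W^{0,1}\oplus(\text{fiber}),
\]
sending the standard lattice $\mathbb{Z}^{2d}$ onto the translation lattice. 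I would concretely take coordinates $x\in\mathbb{R}^{2d}$ with $(a,b)=A_\lambda(x)$, where $a$ depends only on $x$ and $b=\lambda\cdot(\text{real-linear in }x)+b_0$ with a transverse coordinate. More precisely, I would use the decomposition $(a,b)=(\xi^{0,1},\lambda\xi^{1,0})+(0,\beta)$ with $\xi\in iW_{\mathbb{R}}$ real and $\beta\in W^{1,0}$ a complementary holomorphic coordinate. Solving for $\xi$ and $\beta$ from $(a,b)$ is polynomial in $(a,\bar a,b,\lambda)$, hence semialgebraic. The lattice acts by $\xi\mapsto\xi+\gamma$ with $\beta$ fixed.

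\textbf{Charts.} Cover $\mathbb{P}^1$ by the two closed discs $|\lambda|\le 2$ and $|\mu|\le2$. Over each disc, take the open sets
\[
\{\xi\in c+(-\tfrac23,\tfrac23)^{2d}\}
\]
for the finitely many centers $c\in\{0,\tfrac12\}^{2d}$ (relative to an integral basis). These translates cover $\mathbb{R}^{2d}$ modulo $\mathbb{Z}^{2d}$, and each maps injectively to $Z_W$, since cube width below $2$ prevents identifications within a cube. Each chart is then a semialgebraic open subset of $\mathbb{C}^{2d+1}$: the coordinates $(\lambda,a,b)$ lie in an open set cut out by semialgebraic inequalities on $\xi(a,b,\lambda)$, together with the disc condition.

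\textbf{Transitions.} Within one Hodge chart, transition maps are lattice translations $(a,b)\mapsto(a+\gamma^{0,1},b+\lambda\gamma^{1,0})$. Only finitely many $\gamma$ occur, because the cubes are bounded in $\xi$. On each overlap piece the relevant $\gamma$ is selected by semialgebraic conditions, so the transitions are semialgebraic and holomorphic. Between the two Hodge charts, on the annulus $\tfrac12\le|\lambda|\le2$, we compose the algebraic transition $(a_\infty,b_\infty)=\mu(a,b)$ with finitely many translations; the boundedness again comes from $\xi$ being bounded and $|\lambda|$ bounded above and below. The projection to $\mathbb{P}^1$ is a coordinate and hence definable. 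The group law is addition in $(a,b)$ followed by a finite semialgebraic case choice of translation back into a chart, so it is definable, and the same holds for inversion.

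\textbf{Morphisms.} A morphism of integral Hodge structures $W\to W'$ induces a linear map on $(a,b)$ that commutes with the transition and maps $\Gamma_W$ into $\Gamma_{W'}$. In charts it is a linear map followed by a finite choice of translation, because the image of a bounded cube is bounded. It is therefore definable.

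\textbf{Main obstacle.} The main obstacle is uniform finiteness near $\lambda=0$ and $\lambda=\infty$. There the translation vectors $(\gamma^{0,1},\lambda\gamma^{1,0})$ degenerate in the $b$-direction, and the real coordinate $\xi$ must stay transverse to the holomorphic coordinate $\beta$. I would check first that $\xi$ is determined by $a$ alone, since $iW_{\mathbb{R}}\to W^{0,1}$ is an isomorphism. This makes the coordinate change $(a,b)\mapsto(\xi,\beta=b-\lambda\xi^{1,0})$ a polynomial real-algebraic isomorphism valid uniformly on the closed discs, including $\lambda=0$, and it yields the bounded fundamental domains.
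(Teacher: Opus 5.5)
\textbf{The gap is in the transitions between the two Hodge charts.} Your coordinate $\xi\in iW_{\mathbb R}$ is determined by $a$ alone, so it cannot be used on the disc around $\mu=0$. There $a=a_\infty/\mu$ is undefined, and the lattice acts by $(a_\infty,b_\infty)\mapsto(a_\infty+\mu\gamma^{0,1},b_\infty+\gamma^{1,0})$. The symmetric choice is therefore $\xi_\infty\in iW_{\mathbb R}$ with $\xi_\infty^{1,0}=b_\infty$.

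On the annulus, $b_\infty=\mu b=\mu\beta+\xi^{1,0}$, hence $\xi_\infty=\xi+\eta(\mu\beta)$, where $\eta:W^{1,0}\to iW_{\mathbb R}$ inverts the $(1,0)$-projection. Your charts bound $\xi$ but leave $\beta$ free, and they must, in order to cover the quotient. So on the overlap of a finite chart with an infinity chart, the required lattice correction is essentially $\eta(\mu\beta)$ rounded into $\Gamma_W$. It takes infinitely many values as $\beta$ varies. The overlap is then periodic in the $\beta$-direction, and the transition differs from the algebraic map $(a,b)\mapsto\mu(a,b)$ by a step function with infinitely many values. Such a map is not semialgebraic.

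Your claim that boundedness of $\xi$ and of $|\lambda|^{\pm1}$ leaves only finitely many translations is therefore false. The points $\lambda=0$ and $\lambda=\infty$, which you single out as the main obstacle, are each harmless on their own. The real obstruction is that the two chart-wise real trivializations differ by an unbounded amount over the annulus.

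What is missing is a single semialgebraic, $\Gamma_W$-equivariant real coordinate on all of $E_W$. It should agree with $\xi^{0,1}$ at $\lambda=0$ and with $\xi_\infty^{0,1}$ at $\lambda=\infty$. The paper writes $a=u+\lambda\overline v$ and $b=v-\lambda\overline u$ with $(u,v)\in V\oplus\overline V$. This gives $u=(a-\lambda\overline b)/(1+|\lambda|^2)=(\overline\mu a_\infty-\overline b_\infty)/(1+|\mu|^2)$, and the lattice acts by $u\mapsto u+c$ with $v$ fixed.

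The charts are then $u^{-1}(B_i)$ for finitely many bounded balls $B_i\subset V=W^{0,1}$, used in both Hodge charts. Every overlap correction lies in the finite set $\Lambda\cap(B_j-B_i)$. The group law and Hodge morphisms are handled the same way, because $u$ is real-linear on fibers.

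Two smaller points:
\begin{itemize}
\item The cube $c+(-\tfrac23,\tfrac23)^{2d}$ has side $\tfrac43>1$, so it contains points differing by a basis vector, and $q$ is not injective on it. You need side at most $1$, for example half-width $\tfrac38$ around the centers $\{0,\tfrac12\}^{2d}$.
\item The discs in the base must be open, since chart domains must be open.
\end{itemize}
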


\begin{proof}
Let $V=W^{0,1}$ and let $\Lambda\subset V$ be the projection of $\Gamma_W$. In the finite parameter chart, the lattice acts by $(\lambda,a,b)\mapsto(\lambda,a+c,b-\lambda\overline c)$ for $c\in\Lambda$. Consider the real coordinates $(u,v)\in V\oplus\overline V$ given by $a=u+\lambda\overline v$ and $b=v-\lambda\overline u$, which satisfy
$$
u=\frac{a-\lambda\overline b}{1+|\lambda|^2},\qquad
v=\frac{b+\lambda\overline a}{1+|\lambda|^2}.
$$
Then the lattice acts by $(u,v)\mapsto(u+c,v)$. Choose finitely many bounded semialgebraic open balls in $V$ whose images cover $V/\Lambda$ and on each of which the quotient map is injective. Their inverse images under $u$ are semialgebraic open sets in the holomorphic coordinates $(\lambda,a,b)$, and the quotient map restricts to a biholomorphism on each of them. Their images cover the quotient over the finite parameter line.

On the other parameter chart, the holomorphic coordinates are $(\mu,a_\infty,b_\infty)=(\lambda^{-1},\lambda^{-1}a,\lambda^{-1}b)$, and
$$
u=\frac{\overline\mu a_\infty-\overline b_\infty}{1+|\mu|^2},\qquad
v=\frac{\overline\mu b_\infty+\overline a_\infty}{1+|\mu|^2}.
$$
The same balls therefore give charts at infinity. On an overlap, the lattice corrections belong to the intersection of $\Lambda$ with the difference of two chosen balls, which is finite since the balls are bounded. The transitions are consequently given on finitely many semialgebraic domains by holomorphic lattice translations, possibly composed with $(a_\infty,b_\infty)=\lambda^{-1}(a,b)$. Thus the transitions and the projection are semialgebraic. The real coordinates $u,v$ are used only to specify the chart domains.

The zero section and the fiberwise group operations are induced by zero, addition and negation in the harmonic parameters. Morphisms of integral Hodge structures are
linear in these parameters and preserve the lattices. The images and finite sums of the chosen bounded balls are bounded, so the same argument gives only finitely many lattice corrections in the target charts, which proves the remaining assertions.
\end{proof}

For a space $Y$ over $S$, denote by $Y_S^{\times n}=Y\times_S\cdots\times_S Y$ its $n$-fold fiber product over $S$.

\begin{thm}\label{thm:def-elliptic}
Suppose that $X$ has genus one. Let
$s_n: (Z_X)_{\mathbb{P}^1}^{\times n}\to Z_X$ denote fiberwise tensor product, and let $\mathbf{1}: \mathbb{P}^1\to Z_X$ be the unit section. Then there are complex analytic isomorphisms over $\mathbb{P}^1$
\begin{align*}
\mathrm{TW}_{\rm DH}(X,\mathrm{GL}(n,\mathbb{C}))
&\cong (Z_X)_{\mathbb{P}^1}^{\times n}/\mathfrak{S}_n,\\
\mathrm{TW}_{\rm DH}(X,\mathrm{SL}(n,\mathbb{C}))
&\cong s_n^{-1}(\mathbf{1})/\mathfrak{S}_n,
\end{align*}
where $s_n^{-1}(\mathbf{1})$ denotes the inverse image of the unit section. Both spaces admit finite semialgebraic complex analytic atlases for which the twistor projections are definable.
\end{thm}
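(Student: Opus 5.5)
I would build the isomorphism chart by chart on the two Hodge charts and then check that the charts glue. In genus one, semistable degree zero $\lambda$-flat bundles have S-equivalence classes represented by direct sums of line $\lambda$-flat bundles. For the Betti side this is the observation in Remark \ref{rem:exceptions}: commuting monodromies have a common eigenvector, so semisimple representations of $\mathbb{Z}^2$ are sums of characters. For the Higgs side one argues similarly via the spectral curve, or cites \cite{FT17}, which describes $\mathcal{M}_{\mathrm{Hod}}(X,\mathrm{GL}(n,\mathbb{C}))$ as the relative symmetric product $\mathrm{Sym}^n_{\mathbb{A}^1}\mathcal{M}_{\mathrm{Hod}}(X,\mathrm{GL}(1,\mathbb{C}))$.

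\textbf{Step 1: the map.} Direct sum gives a map $(Z_X)^{\times n}_{\mathbb{P}^1}\to\mathrm{TW}_{\rm DH}(X,\mathrm{GL}(n,\mathbb{C}))$. It is holomorphic over each chart by the same classifying-map argument used in the proof of Theorem \ref{thm:main} for the full spaces. It glues under Deligne's gluing because direct sums of local systems agree on both charts. The map is $\mathfrak{S}_n$-invariant, so it descends to the analytic quotient by the finite group.

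\textbf{Step 2: bijectivity and isomorphism.} Uniqueness of stable factors \cite[Section~3]{Sim94a} shows the descended map is injective. The genus one fact above shows it is surjective. To get a biholomorphism rather than just a holomorphic bijection, I would work on each Hodge chart and invoke the algebraic isomorphism of \cite{FT17} with the relative symmetric product, after analytification. Analytification commutes with finite quotients, and the analytification of $\mathcal{M}_{\mathrm{Hod}}(X,\mathrm{GL}(1,\mathbb{C}))$ is the restriction of $Z_X$ to the chart. Our map agrees with the analytified isomorphism on points, so the two coincide, and gluing over $\mathbb{C}^*$ gives the global isomorphism. This chart-compatibility check is the step I expect to be the main obstacle: one must confirm that the \cite{FT17} isomorphism is induced by direct sum and is reduced-compatible, so that no nonreduced structure is lost. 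The fallback is normality of the target: a bijective holomorphic map onto a normal space whose source is a finite quotient of a manifold is biholomorphic.

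\textbf{Step 3: the $\mathrm{SL}(n,\mathbb{C})$ case.} The determinant of a direct sum is the tensor product $s_n$ of the factors. So $\mathrm{TW}_{\rm DH}(X,\mathrm{SL}(n,\mathbb{C}))$ is the closed subspace of the $\mathrm{GL}$ space defined by determinant equal to $\mathbf{1}$. Its preimage in $(Z_X)^{\times n}_{\mathbb{P}^1}$ is $s_n^{-1}(\mathbf{1})$, which is smooth since $s_n$ is a submersion, and it is $\mathfrak{S}_n$-stable, giving the second isomorphism.

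\textbf{Step 4: definability.} Proposition \ref{prop:def-rankone} makes $Z_X$ definable with definable projection and group law. Hence the fiber product, $s_n$, $\mathbf{1}$, and $s_n^{-1}(\mathbf{1})$ are definable. For the finite quotient, I would use the Bakker--Brunebarbe--Tsimerman result \cite{BBT23} that quotients of definable analytic spaces by finite groups acting definably exist in the definable category. Concretely, I would refine the charts to $\mathfrak{S}_n$-stable bounded semialgebraic pieces and map each to the symmetric product via polynomial invariants, which are semialgebraic holomorphic. The twistor projection then remains definable.
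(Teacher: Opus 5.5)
Your proposal is correct and follows essentially the paper's route: identify each Hodge chart with the relative symmetric product via \cite{FT17}, check compatibility with Deligne's gluing through direct sums of local systems, obtain the $\mathrm{SL}(n,\mathbb{C})$ case by taking the inverse image of the unit section, and use \cite[Proposition~2.63]{BBT23} for the definable finite quotients. The point you flag as the main obstacle is settled in the paper by deriving the identification from the Fourier--Mukai description of families together with \cite[Theorems~2 and~4]{Vac06}, whose invariant-ring isomorphism is restriction to diagonal representations and is therefore induced by direct sum. The paper also uses exactness of invariants in characteristic zero to show that taking the fiber over the unit section commutes with the good quotient and with the $\mathfrak{S}_n$-quotient, which your Step~3 uses without saying so. Finally, you should drop your normality fallback: normality of the genus one Hodge moduli space is essentially what this invariant-theoretic identification provides, so it cannot be assumed beforehand.
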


\begin{proof}
Let $H$ be the rank one Hodge moduli space over $\mathbb{A}^1$, and choose an origin on $X$. For a scheme $S$ of finite type over $\mathbb{A}^1$, the relative Fourier--Mukai transform identifies families of semistable $\lambda$-flat bundles of rank $n$ and degree zero parametrized by $S$ with $S$-flat coherent sheaves on $H\times_{\mathbb{A}^1}S$ whose support is finite over $S$ and whose fibers have length $n$, compatibly with base change \cite[Theorem~3.14 and Section~5]{FT17}. For an affine open $\mathrm{Spec}(A)\subset H$, let $R=\mathbb{C}[\lambda]$, and let $\mathrm{Rep}_{n,R}(A)$ be the relative scheme of $n$-dimensional representations of $A$. After locally choosing a frame
of their finite pushforward to $S$, the transformed families supported in $\mathrm{Spec}(A)$ are parametrized by $\mathrm{Rep}_{n,R}(A)$, and changing the frame gives the action of $\mathrm{GL}(n,\mathbb{C})$ fixing $\lambda$. Since $H\to\mathbb{A}^1$ is smooth, $A$ is flat over $R$, so the corresponding affine quotient is determined by
$$
R[\mathrm{Rep}_{n,R}(A)]^{\mathrm{GL}(n,\mathbb{C})}\cong(A^{\otimes_R n})^{\mathfrak{S}_n}
$$
by \cite[Theorems~2 and~4]{Vac06}. The invariant-ring isomorphism is induced by restriction to diagonal representations, so the corresponding map of coarse moduli spaces is induced by direct sum of line $\lambda$-flat bundles. Since the support of a transformed family is finite over the parameter space, the condition that it be contained in a given affine open of $H$ is open and is preserved by semisimplification.
These support opens cover the moduli problem, because every finite subset of $H$ lies in an affine open. The affine quotient identifications are compatible with restriction, so they glue to an isomorphism
$$
\mathcal{M}_{\mathrm{Hod}}(X,\mathrm{GL}(n,\mathbb{C}))\cong H_{\mathbb{A}^1}^{\times n}/\mathfrak{S}_n
$$
over $\mathbb{A}^1$.

Under this isomorphism, determinant is induced by tensor product of the line factors. The equations of the unit section are invariant, so taking its inverse image
commutes with both the good quotient of the framed parameter space and the finite $\mathfrak{S}_n$-quotient, by exactness of invariants in characteristic zero
\cite[Section~1]{Sim94a}. A scalar automorphism acts on a determinant trivialization by its $n$th power, so choosing such a trivialization gives no additional coarse parameter \'etale locally on the base. This identifies the special linear Hodge moduli space with the quotient of the tensor product fiber.

The same argument applies to $\overline{X}$. Since the Riemann--Hilbert correspondence preserves direct sum and determinant, the identifications on the two Hodge charts commute with Deligne's gluing and yield the asserted analytic isomorphisms over $\mathbb{P}^1$. Proposition \ref{prop:def-rankone} makes the relative products, the tensor product fiber and the permutation actions definable. Hence \cite[Proposition~2.63]{BBT23} gives definable structures on both quotients. Transporting these structures through the analytic isomorphisms above gives the required atlases, and the projections remain definable because
the isomorphisms are over $\mathbb{P}^1$.
\end{proof}

Proposition \ref{prop:def-rankone} and Theorem \ref{thm:def-elliptic} prove Theorem \ref{thm:definable}.

\begin{rem}\label{rem:def-hitchin}
In rank one, the comparison isomorphism recalled in Section \ref{sec:rankone} also holds when $g=1$, so Proposition \ref{prop:def-rankone} gives a definable structure on the Hitchin twistor space \cite[Theorem~4.2]{Sim97}. When $g=1$ and $n>1$, the space $\mathrm{M}_{\rm sol}(X,G)^s$ of irreducible solutions is empty by Remark \ref{rem:exceptions}, whereas the smooth coarse moduli locus consists of direct sums of pairwise non-isomorphic line objects. Points with repeated factors are singular, since a single double collision has transverse germ $\mathbb{C}^2/\{\pm1\}$ and every other collision is a limit of such points, also with fixed determinant.

On this smooth locus, the trace pairing splits into the rank one pairings, and the first cohomology of a nontrivial rank one local system on $X$ vanishes. Thus direct sum identifies its $L^2$ metric with the
quotient of the product metric, restricted to the tensor product fiber in the special linear case. The Hitchin twistor space of this smooth coarse locus is therefore
the corresponding quotient in Theorem
\ref{thm:def-elliptic} with the images of the pairwise diagonals removed, and is definable.
\end{rem}

\begin{rem}\label{rem:def-scope}
For $g\geq2$ and $n\geq2$, direct sums of line $\lambda$-flat bundles do not cover the full Deligne--Hitchin twistor space, since Proposition \ref{prop:prym-map} gives stable objects of rank $n$. Therefore, the symmetric product argument does not settle definability of the full space or its stable locus in this range. Moreover, a definable family need not have a definable
classifying map to a moduli space with its standard algebraic definable structure, as the semialgebraic family of line bundles in
\cite[Sections~2.2--2.3]{EK26} shows.

For $g\geq1$, Deligne's gluing is not definable with respect to the standard algebraic definable structures on the punctured Hodge charts. Otherwise, both it and its inverse would be algebraic by \cite[Corollary~3.11]{BBT23}, and the two Hodge schemes would then glue to an algebraization, contrary to Theorem \ref{thm:main}. This obstruction already occurs in rank one, which shows why the existence of other definable atlases must be distinguished from compatibility with the standard Hodge charts.
\end{rem}

\bigskip

\noindent\textbf{Acknowledgments}. 
The precise meaning and scope of the non-algebraicity of Deligne--Hitchin twistor spaces have, in the author's view, not always been clear in the mathematical community, which has motivated him to provide a precise statement together
with a rigorous proof. He would like to thank, among others, Zhi Hu, Carlos Simpson, Hao Sun and Bin Xu for their helpful comments and discussions on various occasions. He is especially grateful to Zhi Hu for valuable suggestions concerning the definable structures studied in Section \ref{sec:definable}.

\bigskip

\noindent\textbf{Declarations on the use of generative AI}. During the development of this paper, the author used OpenAI's ChatGPT (GPT-5.6 and 6) as an interactive tool to explore a proof strategy for  stable loci as presented in Section \ref{sec:stable} and formulate some purely algebraic arguments. All mathematical arguments and references were drafted by the author, with revision, proof simplification, and language polishing by AI, and the author independently checked all mathematical arguments and takes full responsibility for the contents.

\Needspace{3\baselineskip}

\bigskip
\noindent\small{\textsc{School of Mathematics, Nanjing University}\\
Nanjing 210093, China}\\
\emph{E-mail address}: \texttt{pfhwangmath@gmail.com}


\begin{thebibliography}{99}

\bibitem{BBT23}
B.~Bakker, Y.~Brunebarbe and J.~Tsimerman,
o-minimal GAGA and a conjecture of Griffiths.
\emph{Invent. Math.} \textbf{232} (2023), 163--228.

\bibitem{BBHR26}
F.~Beck, I.~Biswas, S.~Heller and M.~R\"oser,
Geometry of the space of sections of twistor spaces with circle action.
\emph{SIGMA Symmetry Integrability Geom. Methods Appl.}
\textbf{22} (2026), Paper No.~008, 43 pp.

\bibitem{BGHL09}
I.~Biswas, T.~L.~G\'omez, N.~Hoffmann and M.~Logares,
Torelli theorem for the Deligne--Hitchin moduli space.
\emph{Comm. Math. Phys.} \textbf{290} (2009), 357--369.

\bibitem{BH17}
I.~Biswas and S.~Heller,
On the automorphisms of a rank one Deligne--Hitchin moduli space.
\emph{SIGMA.} \textbf{13} (2017), Paper No.~072, 19 pp.

\bibitem{CT09}
B.~Conrad and M.~Temkin,
Non-Archimedean analytification of algebraic spaces.
\emph{J. Alg. Geom.} \textbf{18} (2009), no.~4, 731--788.

\bibitem{Cor88}
K.~Corlette,
Flat $G$-bundles with canonical metrics.
\emph{J. Diff. Geom.} \textbf{28} (1988), no.~3, 361--382.

\bibitem{Dem12}
J.-P.~Demailly,
\emph{Complex analytic and differential geometry}.
Online monograph, version of 21 June 2012.
\url{https://www-fourier.univ-grenoble-alpes.fr/~demailly/manuscripts/agbook.pdf}.

\bibitem{Don87}
S.~K.~Donaldson,
Twisted harmonic maps and the self-duality equations.
\emph{Proc. London Math. Soc.} (3) \textbf{55} (1987), no.~1, 127--131.

\bibitem{EK26}
H.~Esnault and M.~Kerz,
There is no Definable Grauert Direct Image Theorem.
\href{https://arxiv.org/abs/2601.18711v2}{arXiv:2601.18711}.

\bibitem{FH24}
E. Franco and R. Hanson,
The Dirac--Higgs complex and categorification of $(BBB)$-branes.
\emph{Int. Math. Res. Not.} (2024), no. 19, 12919--12953.

\bibitem{FT17}
E.~Franco and P.~Tortella,
Moduli spaces of $\Lambda$-modules on abelian varieties.
\emph{Adv. Math.} \textbf{318} (2017), 459--496.

\bibitem{GX08}
W.~M.~Goldman and E.~Z.~Xia,
Rank one Higgs bundles and representations of fundamental groups of
Riemann surfaces.
\emph{Mem. Amer. Math. Soc.} \textbf{193} (2008), no.~904, viii+69 pp.

\bibitem{HT03}
T.~Hausel and M.~Thaddeus,
Mirror symmetry, Langlands duality, and the Hitchin system.
\emph{Invent. Math.} \textbf{153} (2003), no.~1, 197--229.

\bibitem{Hit87}
N.~J.~Hitchin,
The self-duality equations on a Riemann surface.
\emph{Proc. London Math. Soc.} (3) \textbf{55} (1987), no.~1, 59--126.

\bibitem{HH22}
Z. Hu and P. Huang, 
Simpson--Mochizuki correspondence for $\lambda$-flat bundles.
\emph{J. Math. Pures Appl.} \textbf{164} (2022),
57--92.

\bibitem{HHZ24}
Z.~Hu, P.~Huang and R.~Zong,
Generalized Deligne--Hitchin twistor spaces: construction and properties.
\emph{Bull. Sci. Math.} \textbf{191} (2024), Paper No.~103396.

\bibitem{Hua20a}
P.~Huang,
Non-Abelian Hodge theory and related topics.
\emph{SIGMA.} \textbf{16} (2020), Paper No.~029, 34 pp.

\bibitem{Hua20b}
P.~Huang,
\emph{Non-abelian Hodge theory and some specializations}.
Ph.D. thesis, Universit\'e C\^ote d'Azur and
University of Science and Technology of China, 2020.

\bibitem{Iva13}
S.~Ivashkovich,
Extension properties of complex analytic objects.
Max-Planck-Institut f\"ur Mathematik Preprint Series \textbf{15} (2013).

\bibitem{KTY26}
J. Kryczka, Y. Tanaka and S.-T. Yau,
Lagrangian correspondences of nonabelian Hodge type and shifted twistor structures.
\href{https://arxiv.org/abs/2606.30637v2}{arXiv:2606.30637}.

\bibitem{Sim92}
C.~T.~Simpson,
Higgs bundles and local systems.
\emph{Publ. Math. Inst. Hautes \'Etudes Sci.} \textbf{75} (1992), 5--95.

\bibitem{Sim94a}
C.~T.~Simpson,
Moduli of representations of the fundamental group of a smooth projective variety I.
\emph{Publ. Math. Inst. Hautes \'Etudes Sci.} \textbf{79} (1994), 47--129.

\bibitem{Sim94b}
C.~T.~Simpson,
Moduli of representations of the fundamental group of a smooth projective variety II.
\emph{Publ. Math. Inst. Hautes \'Etudes Sci.} \textbf{80} (1994), 5--79.

\bibitem{Sim97}
C. T. Simpson,
The Hodge filtration on nonabelian cohomology.
In \emph{Algebraic geometry---Santa Cruz 1995}, Proc. Sympos. Pure Math., vol.~62, Part~2, Amer. Math. Soc., Providence, RI, 1997, 217--281.

\bibitem{Sim08}
C.~T.~Simpson,
A weight two phenomenon for the moduli of rank one local systems on open varieties.
In \emph{From Hodge theory to integrability and TQFT: $tt^{*}$-geometry}, 
Proc. Sympos. Pure Math., vol.~78,
Amer. Math. Soc., Providence, RI, 2008, 175--214.

\bibitem{Sim22}
C.~T.~Simpson,
The twistor geometry of parabolic structures in rank two.
\emph{Proc. Indian Acad. Sci. Math. Sci.}
\textbf{132} (2022), Paper No.~54, 26 pp.

\bibitem{Sim24}
C.~T.~Simpson,
Twistor space for local systems on an open curve.
\emph{Int. J. Math.} \textbf{35} (2024), no.~9,
Paper No.~2441013, 22 pp.

\bibitem{Sta}
The Stacks Project Authors,
\emph{The Stacks Project}, 2026.

\bibitem{Vac06}
F.~Vaccarino,
Symmetric product as moduli space of linear representations.
\href{https://arxiv.org/abs/math/0608655}{arXiv:math/0608655}.

\bibitem{Ver14}
M.~Verbitsky,
Holography principle for twistor spaces.
\emph{Pure Appl. Math. Q.} \textbf{10} (2014), no.~2, 325--354.

\end{thebibliography}
\end{document}